\newtheorem{lemma}{Lemma}[section]
\newtheorem{thm}[lemma]{Theorem}
\newtheorem{prop}[lemma]{Proposition}
\newtheorem{conj}[lemma]{Conjecture}
\newtheorem{example}[lemma]{Example}
\newtheorem{claim}[lemma]{Claim}
\title[Frankl-F\"{u}redi-Kalai Inequalities on $\gamma$-vectors]{Frankl-F\"{u}redi-Kalai Inequalities on the $\gamma$-vectors of flag nestohedra}
\author{Natalie Aisbett}
\address{School of Mathematics and Statistics\\
University of Sydney, NSW, 2006\\
Australia}
\email{N.Aisbett@maths.usyd.edu.au}
\date{}
\begin{document}

\maketitle{}

\numberwithin{figure}{section}
\numberwithin{equation}{section}

\begin{abstract}
For any flag nestohedron, we define a flag simplicial complex whose $f$-vector is the $\gamma$-vector of the nestohedron. This proves that the $\gamma$-vector of any flag nestohedron satisfies the Frankl-F\"{u}redi-Kalai inequalities, partially solving a conjecture by Nevo and Petersen \cite{np}. We also compare these complexes to those defined by Nevo and Petersen in \cite{np} for particular flag nestohedra.
\end{abstract}

\begin{section}{Introduction}

For any building set $B$ there is an associated simple polytope $P_B$ called the \emph{nestohedron} (see Section 2 below, \cite[Section 7]{po} and \cite[Section 6]{prw}). When $B = B(G)$ is the building set determined by a graph $G$, $P_{B(G)}$ is the well-known graph-associahedron of $G$ (see \cite[Example 2.1]{ai}, \cite[Sections 7 and 12]{prw}, and \cite{vol}). The numbers of faces of $P_B$ of each dimension are conveniently encapsulated in its $\gamma$-polynomial $\gamma(B) =\gamma(P_B)$ defined below.\\

Recall that for a $d-1$-dimensional simplicial complex $\Delta$, the $f$-polynomial is a polynomial in $\mathbb{Z}[t]$ defined as follows:

$$f(\Delta)(t):= f_0 + f_1t + \cdots +f_{d}t^{d},$$ where $f_i(\Delta)$ is the number of $(i-1)$-dimensional faces of $\Delta$, and $f_0(\Delta) =1$. The $h$-polynomial is given by

$$h(\Delta)(t): =(t-1)^{d}f(\Delta)\left(\frac{1}{t-1}\right).$$  When $\Delta$ is a homology sphere $h(\Delta)$ is symmetric (this is known as the Dehn-Somerville relations) hence it can be written

$$h(\Delta)(t) = \sum_{i=0}^{\lfloor\frac{d}{2}\rfloor}\gamma_it^i(1+t)^{d-2i},$$ for some $\gamma_i \in \mathbb{Z}$. Then the $\gamma$-polynomial is given by
$$\gamma(\Delta)(t): = \gamma_0 + \gamma_1t + \cdots +\gamma_{\lfloor \frac{d}{2}\rfloor}t^{\lfloor \frac{d}{2}\rfloor }.$$

\noindent The vectors of coefficients of the $f$-polynomial, $h$-polynomial and $\gamma$-polynomial are known respectively as the $f$-vector, $h$-vector and $\gamma$-vector. If $P$ is a simple $(d+1)$-dimensional polytope then the dual simplicial complex $\Delta_P$ of $P$ is the boundary complex (of dimension $d$) of the polytope that is polar dual to $P$. The $f$-vector, $h$-vector and $\gamma$-vector of $P$ are defined via $\Delta_P$ as

$$f(P)(t) := t^{d}f(\Delta_P)(t^{-1}),$$ so that $f_i(P)$ is the number of $i$ dimensional faces of $P$, and

$$h(P)(t):=  h(\Delta_P)(t)$$ $$\gamma(P)(t): = \gamma(\Delta_P)(t).$$

When $B$ is a building set, we denote the $\gamma$-polynomial for $P_{B}$ by $\gamma(B)$.\\

Recall that a simplicial complex $\Delta$ is \emph{flag} if every set of pairwise adjacent vertices is a face. Gal conjectured

\begin{conj}
\cite[Conjecture 2.1.7]{gal}. If $\Delta$ is a flag homology sphere then $\gamma(\Delta)$ is non negative.
\end{conj}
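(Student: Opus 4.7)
The plan is to attempt to realize $\gamma(\Delta)$ as the $f$-vector of an auxiliary flag simplicial complex $\Gamma(\Delta)$. If such a $\Gamma(\Delta)$ can be constructed, non-negativity is immediate, and one in fact obtains the stronger Frankl--F\"{u}redi--Kalai inequalities that are the topic of this paper. This generalises the strategy used below for flag nestohedra.

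First I would try to reduce to structured subclasses. Given a flag homology sphere $\Delta$ of dimension $d$, I would look for a flag subdivision, or a decomposition along a separating flag subcomplex, so that $\gamma(\Delta)$ splits as a sum of $\gamma$-polynomials of simpler flag spheres --- ideally members of classes in which $\gamma$-positivity is already established (flag nestohedra, Coxeter complexes, cluster complexes, barycentric subdivisions of regular CW-spheres). Induction on dimension, combined with additivity of $f$-vectors under disjoint union of the corresponding complexes $\Gamma(\cdot)$, would then transfer non-negativity to $\Delta$ itself. A key technical input here is that $\gamma$ is known to behave well under joins and suspensions of flag spheres, which gives hope that a sufficiently rich gluing calculus can be set up.

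Second, in the absence of such a decomposition, I would attempt a direct construction of $\Gamma(\Delta)$ modelled on the pattern-avoidance constructions used in this paper for nestohedra. I would fix a shelling or a generic linear extension of the face poset of $\Delta$, assign to each facet a \emph{descent set} recording local obstructions to the chosen shelling, and take $\Gamma(\Delta)$ to be the simplicial complex whose faces are these descent sets. Flagness is built in by requiring descents to be pairwise compatible in a sense dictated by the flagness of $\Delta$. The aim is then to show, by comparing generating functions with the Dehn--Sommerville expression for $h(\Delta)$, that the resulting $f$-polynomial coincides with $\gamma(\Delta)$.

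The central obstacle is that Gal's conjecture is a long-standing open problem: no uniform combinatorial model for $\gamma(\Delta)$ is known for arbitrary flag homology spheres, and even partial positivity results (such as the nontrivial fact that $\gamma_2\ge 0$ in full generality) have required considerable effort. A purely local shelling-based construction of $\Gamma(\Delta)$ is therefore unlikely to work uniformly without some global hypothesis (building-set origin, group action, Coxeter structure). The realistic outcome of this plan is not a complete resolution of the conjecture, but rather its reduction to the structural statement that every flag homology sphere is built, in a $\gamma$-additive fashion, from a list of classes for which $\gamma$-positivity has been verified; progress then proceeds by extending that list, exactly as the present paper does for flag nestohedra.
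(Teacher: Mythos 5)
The statement you were asked about is Gal's conjecture, which appears in this paper only as a cited \emph{conjecture}: the paper does not prove it, and it remains open in general. There is therefore no proof in the paper to compare your attempt against. What the paper actually establishes (Theorem \ref{mainthm}) is the stronger Frankl--F\"{u}redi--Kalai statement for the restricted class of flag nestohedra, by constructing, from a choice of flag ordering $(D,b_1,\dots,b_k)$ of the building set, an explicit flag complex $\Gamma(B)$ with $f(\Gamma(B))=\gamma(P_B)$; the inductive engine is Volodin's recursion $\gamma(B')=\gamma(B)+t\,\gamma(B|_b)\gamma(B/b)$ together with the identifications $\Gamma(B)|_{U_k}\cong\Gamma(B/b_k)$ and $\Gamma(B)|_{V_k}=\Gamma(B|_{b_k})$.

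Your proposal is a research outline rather than a proof, and to your credit you say so explicitly in the final paragraph. The concrete gaps are exactly the ones you name: (i) there is no known decomposition of an arbitrary flag homology sphere into pieces drawn from classes where $\gamma$-nonnegativity is established, and no ``$\gamma$-additive gluing calculus'' of the generality you would need; (ii) the shelling/descent-set construction in your second step has no mechanism forcing the resulting $f$-polynomial to equal $\gamma(\Delta)$ --- in the nestohedron case this is delivered by the building-set recursion above, which has no analogue for a bare flag sphere. So the proposal cannot be completed into a proof of the conjecture; its honest reading is a reduction of the problem to structural statements that are themselves open. If your goal is to reproduce what this paper does, you should instead restrict to flag nestohedra and carry out the flag-ordering construction.
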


This implies that the $\gamma$-vector of any flag polytope has non negative entries. Gal's conjecture was proven for flag nestohedra by Volodin in \cite[Theorem 9]{vol}. Frohmader \cite[Theorem 1.1]{fr} showed that the $f$-vector of any flag simplicial complex satisfies the Frankl-F\"{u}redi-Kalai inequalities. Nevo and Petersen conjectured the following strengthening of Gal's conjecture:

\begin{conj}
\cite[Conjecture 6.3]{np}. If $\Delta$ is a flag homology sphere then $\gamma(\Delta)$ satisfies the Frankl-F\"{u}redi-Kalai inequalities. \label{bigconj}
\end{conj}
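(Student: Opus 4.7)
Since the conjecture in full generality seems currently out of reach, I would aim to prove it for flag nestohedra, as the abstract promises. My plan is to reduce this to Frohmader's theorem by constructing, for each flag building set $B$, a flag simplicial complex $\Delta_B$ whose $f$-vector equals the $\gamma$-vector of $P_B$. Once such a $\Delta_B$ is built, Frohmader's theorem guarantees that $f(\Delta_B)$ satisfies the Frankl-F\"uredi-Kalai inequalities, and these are exactly the desired inequalities for $\gamma(P_B)$.

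To set up the construction, I would first revisit Volodin's proof that $\gamma(B)\ge 0$ for flag $B$. That proof should yield a concrete combinatorial interpretation of each $\gamma_i(B)$ as enumerating some family of ``allowed'' nested sets (or $B$-forests, or partial matchings compatible with the nested-set combinatorics) inside $B$. The vertices of $\Delta_B$ would then be the objects counted by $\gamma_1(B)$, and its $(i-1)$-faces would be the objects counted by $\gamma_i(B)$, viewed as $i$-element collections of vertices that are mutually ``compatible'' in an appropriate sense.

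The main obstacle is to make $\Delta_B$ \emph{flag}. Flagness demands that compatibility be a strictly pairwise condition on vertices, whereas most natural enumerative interpretations of higher $\gamma_i$ come with a global condition rather than a conjunction of pairwise conditions. Overcoming this requires finding (or carefully engineering) a compatibility relation for which the enumeration of pairwise-compatible $i$-subsets agrees with $\gamma_i(B)$ bijectively. This is the hard step, and one should expect it to rest on a fine structural feature of flag building sets that forces higher obstructions to split into pairwise ones.

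If a direct uniform construction proves too rigid, I would fall back on induction on $|B|$ using the standard building-set operations (removing a singleton, splitting along a connected component), together with the recursive behaviour of $\gamma(B)$ under these operations. The inductive step would glue flag complexes associated to smaller flag building sets into $\Delta_B$, the main care being to preserve both flagness and the correct $f$-vector under the gluing, perhaps via a join-like operation that respects the recursion of $\gamma(B)$. In parallel I would use small examples (e.g.\ the associahedron and the stellohedron, where Nevo and Petersen already supply such complexes) as a sanity check that the proposed construction agrees with the known ones.
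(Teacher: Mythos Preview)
Your high-level strategy coincides with the paper's: restrict to flag nestohedra, construct a flag complex whose $f$-vector equals $\gamma(P_B)$, and invoke Frohmader. Your fallback plan is in fact what the paper does: it proceeds by induction, and the gluing is precisely a join matching Volodin's recursion $\gamma(B')=\gamma(B)+t\,\gamma(B'|_b)\gamma(B'/b)$. However, what you have written is a research plan, not a proof; the decisive step you yourself flag as ``the hard step'' --- finding a pairwise compatibility relation whose cliques enumerate the $\gamma_i$ --- is left entirely open. That is exactly the content the paper supplies and that your proposal lacks.

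Concretely, the paper fixes a minimal flag building set $D\subseteq B$ and a \emph{flag ordering} $b_1,\dots,b_k$ of $B\setminus D$ (an ordering in which every initial segment $D\cup\{b_1,\dots,b_j\}$ is again flag; this exists by Volodin). The vertex set of $\Gamma(B)$ is $\{v(b_1),\dots,v(b_k)\}$, and $v(b_i)$ is joined to $v(b_j)$ for $i<j$ exactly when $b_i$ is ``non-degenerate'' with respect to $b_j$, a condition split into two cases $U_j$ and $V_j$ according to whether $b_i\not\subseteq b_j$ or $b_i\subseteq b_j$. The substance of the argument is then the pair of propositions showing $\Gamma(B)|_{U_k}\cong\Gamma(B/b_k)$ and $\Gamma(B)|_{V_k}=\Gamma(B|_{b_k})$, together with the observation that every $U_k$-vertex is adjacent to every $V_k$-vertex, so the link of $v(b_k)$ is the join $\Gamma(B/b_k)*\Gamma(B|_{b_k})$; this makes the $f$-vector recursion match Volodin's $\gamma$-recursion. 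Two small corrections to your plan: the relevant inductive operation is \emph{adding one element to a flag building set} (not removing a singleton or splitting into components), and the recursion uses restriction $B|_b$ and contraction $B/b$ simultaneously. Also be aware that the resulting complex depends on the chosen flag ordering and need not agree with the Nevo--Petersen complexes even in the cases they treat, so ``sanity checks'' against their constructions may mislead.
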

They proved this in \cite{np} for the following classes of flag spheres:

\begin{itemize}
\item $\Delta$ is a Coxeter complex (including the simplicial complex dual to $P_{B(K_n)}$),
\item $\Delta$ is the simplicial complex dual to an associahedron ($=P_{B(Path_n)}$),
\item $\Delta$ is the simplicial complex dual to a cyclohedron ($=P_{B(Cyc_n)}$),
\item $\Delta$ has $\gamma_1(\Delta) \le 3$,
\end{itemize}
by showing that the $\gamma$-vector of such $\Delta$ is the $f$-vector of a flag simplicial complex. In \cite{npt} this is proven for the barycentric subdivision of a simplicial sphere.\\

In this paper we prove Conjecture \ref{bigconj} for all flag nestohedra:

\begin{thm}
If $P_B$ is a flag nestohedron, there is a flag simplicial complex $\Gamma(B)$ such that $f(\Gamma(B))=\gamma(P_B)$. In particular $\gamma(P_B)$ satisfies the Frankl-F\"{u}redi-Kalai inequalities. \label{mainthm}
\end{thm}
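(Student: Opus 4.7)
The plan is to construct $\Gamma(B)$ by induction on a measure of complexity of the flag building set $B$ --- for instance, the number of non-singleton elements of $B$, or the cardinality of $\bigcup B$. The inductive step would mirror the recursion on $\gamma$-polynomials that Volodin used in \cite{vol} to establish $\gamma$-nonnegativity for flag nestohedra: select a suitable element $i$ in the ground set, form smaller flag building sets $B'$ and $B''$ via a deletion-and-contraction procedure, and extract a recursion of the shape
\[
\gamma(B) \;=\; \gamma(B') \;+\; t\,\gamma(B''),
\]
or some similar positive linear combination. Inductively, flag simplicial complexes $\Gamma(B')$ and $\Gamma(B'')$ with the prescribed $f$-vectors are already in hand.

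The combinatorial core is then the assembly of $\Gamma(B)$ from $\Gamma(B')$ and $\Gamma(B'')$ at the level of flag complexes. Addition of $f$-polynomials is realized on vertex sets by disjoint union; multiplication by $t$ is realized by coning, i.e.\ adjoining a new vertex $v$ adjacent to every vertex of a chosen subcomplex and taking the flag closure. Both operations preserve flagness in isolation, so the delicate point is choosing which vertices of $\Gamma(B'')$ form the base of the cone, and which cross-edges (if any) connect the two pieces, so that the combined complex realizes exactly the right sum of $f$-vectors.

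The base case is the trivial building set $B=\{\{1\},\dots,\{n\}\}$, for which $P_B$ is a simplex, $\gamma(P_B)=1$, and $\Gamma(B)$ is a single point. The main obstacle will be proving the flag property of the assembled complex at each inductive step: every set of pairwise-adjacent vertices in $\Gamma(B)$ must genuinely be a face. This will impose sharp combinatorial constraints on the choice of reduction element $i$, on the edges introduced by the cone, and on whether vertices of $\Gamma(B')$ and $\Gamma(B'')$ are identified, because the wrong choice can create spurious cliques that simultaneously destroy flagness and inflate the $f$-vector. Once $\Gamma(B)$ is built and shown to be flag with $f(\Gamma(B))=\gamma(P_B)$, the Frankl-F\"{u}redi-Kalai conclusion is then immediate from Frohmader's theorem \cite{fr}.
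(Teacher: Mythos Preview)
Your high-level instinct --- induct along Volodin's recursion and realise each step on the level of flag complexes --- is exactly what the paper does, but several of the concrete details in your plan do not match the actual recursion and would not go through as stated.

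First, the reduction is not by deleting a \emph{ground-set} element; it is by removing the last \emph{building-set} element $b_k$ in a chosen ``flag ordering'' $D,b_1,\dots,b_k$ of $B$. Volodin's formula then reads
\[
\gamma(B)=\gamma(B_{k-1})+t\,\gamma(B|_{b_k})\,\gamma(B/b_k),
\]
so the correction term is a \emph{product} of two smaller $\gamma$-polynomials, not a single $\gamma(B'')$. Your ``$\gamma(B')+t\gamma(B'')$'' shape, and your base case $B=\{\{1\},\dots,\{n\}\}$ (whose nestohedron is a point, not a simplex, and which is not connected), both need to be replaced: the correct base is a minimal connected flag building set $D$, where $\gamma(D)=1$ and $\Gamma(D)$ is the empty complex.

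Second, ``disjoint union for addition, cone for multiplication by $t$'' is not how the pieces fit together. The complex $\Gamma(B_{k-1})$ is not placed disjointly from the cone; rather $\Gamma(B_{k-1})$ is an \emph{induced subcomplex} of $\Gamma(B)$, and the new vertex $v(b_k)$ is adjoined with link equal to $\Gamma(B|_{b_k})*\Gamma(B/b_k)$. For the $f$-vector to come out right you must realise both $\Gamma(B|_{b_k})$ and $\Gamma(B/b_k)$ as specific induced subcomplexes of $\Gamma(B_{k-1})$ whose vertex sets are completely joined to one another. This cannot be arranged by an ad hoc inductive assembly: the identifications at step $k$ have to be compatible with those at all earlier steps.

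The missing idea is therefore a \emph{global} definition of $\Gamma(B)$. The paper fixes a flag ordering once and for all, puts a vertex $v(b_j)$ for each $b_j\in B\setminus D$, and declares $v(b_i)$ adjacent to $v(b_j)$ ($i<j$) by an explicit rule in terms of $b_i,b_j$ and $B_{i-1}$ (the sets $U_j,V_j$). The real work is then two lemmas showing that $\Gamma(B)|_{U_k}\cong\Gamma(B/b_k)$ and $\Gamma(B)|_{V_k}=\Gamma(B|_{b_k})$, together with the check that every $v(b_u)$ with $u\in U_k$ is adjacent to every $v(b_w)$ with $w\in V_k$. Flagness is then automatic (the complex is the clique complex of a graph by definition), and the $f$-vector identity follows by the one-line induction you outlined. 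Your proposal anticipates that ``choosing the cross-edges'' is delicate, but does not supply the mechanism that makes all these choices coherent; that mechanism is precisely the global edge rule coming from the flag ordering.
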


Our construction for $\Gamma(B)$ depends on the choice of a ``flag ordering" for $B$ (see Section 3 below). In the special cases considered by \cite{np} our $\Gamma(B)$ does not always coincide with the complex they construct.\\

Here is a summary of the contents of this paper. Section 2 contains preliminary definitions and results relating to building sets and nestohedra. In Section 3 we define the flag simplicial complex $\Gamma(B)$ for a building set $B$ and prove Theorem \ref{mainthm}. In Section 4 we compare the simplicial complexes $\Gamma(B)$ to the flag simplicial complexes defined in \cite{np}, and give combinatorial definitions for $\Gamma(B)$ when $B = B(K_n)$ and $B=B(K_{1,n-1})$.\\

\textbf{Acknowledgements}\\

This paper forms part of my PhD research in the School of Mathematics and Statistics at the University of Sydney. I would like to thank my supervisor Anthony Henderson for his feedback and help.

\end{section}

\begin{section}{Preliminaries}

A \emph{building set} $B$ on a finite set $S$ is a set of non empty subsets of $S$ such that
\begin{itemize}
\item For any $I,~ J \in B$ such that $I \cap J \ne \emptyset$, $I \cup J \in B$.
\item $B$ contains the singletons $\{i\}$, for all $i \in S$.
\end{itemize}

$B$ is \emph{connected} if it contains $S$. For any building set $B$, $B_{max}$ denotes the set of maximal elements of $B$ with respect to inclusion. The elements of $B_{max}$ form a disjoint union of $S$, and if $B$ is connected then $B_{max} = \{S\}$. Building sets $B_1$, $B_2$ on $S$ are \emph{equivalent}, denoted $B_1 \cong B_2$, if there is a permutation $\sigma: S \rightarrow S$ that induces a one to one correspondence $B_1 \rightarrow B_2$. \\

Let $B$ be a building set on $S$ and $I \subseteq S$. The \emph{restriction of $B$ to $I$} is the building set $$B|_{I}: = \{b ~|~ b \in B, ~b \subseteq I \}~~\hbox{on $I$}.$$ The \emph{contraction of $B$ by $I$} is the building set
$$B/I: = \{b \backslash I~|~b \in B,~b \not \subseteq I\}~~\hbox{on $S-I$.}$$

We associate a polytope to a building set as follows. Let $e_1,....,e_n$ denote the standard basis vectors in $\mathbb{R}^n$. Given $I \subseteq [n]$, define the simplex $\Delta_I : = ConvexHull(e_i~|~i \in I)$. Let $B$ be a building set on $[n]$. The \emph{nestohedron} $P_B$ is a polytope defined in \cite{po} and \cite{prw} as the Minkowski sum:

$$P_B := \sum_{I \in B}\Delta_I.$$

A $(d-1)$-dimensional face of a $d$-dimensional polytope is called a \emph{facet}. A simple polytope $P$ is \emph{flag} if any collection of pairwise intersecting facets has non empty intersection, i.e. its dual simplicial complex is flag. We use the abbreviation \emph{flag complex} in place of flag simplicial complex. A building set $B$ is \emph{flag} if $P_B$ is flag.\\

A \emph{minimal flag building set} $D$ on a set $S$ is a connected building set on $S$ that is flag, such that no proper subset of its elements forms a connected flag building set on $S$. Minimal flag building sets are described in detail in \cite[Section 7.2]{prw}. They are in bijection with binary trees with leaf set $S$. Given such a tree, the corresponding minimal flag building set consists of the sets of descendants of the vertices of the tree. If $D$ is a minimal flag building set then $\gamma(D) = 1$ (see \cite[Section 7.2]{prw}).\\

Let $B$ be a building set. A \emph{binary decomposition} or \emph{decomposition} of a non singleton element $b \in B$ is a set $D \subseteq B$ that forms a minimal flag building set on $b$. Suppose that $b \in B$ has a binary decomposition $D$. The two maximal elements $d_1,~d_2 \in D-\{b\}$ with respect to inclusion are the \emph{maximal components} of $b$ in $D$. Propositions \ref{flaglem} and \ref{flagprop} give alternative characterizations of when a building set is flag.

\begin{prop}
\cite[Lemma 7.2]{ai}. A building set $B$ is flag if and only if every non singleton $b \in B$ has a binary decomposition. \label{flaglem}
\end{prop}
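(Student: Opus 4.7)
The plan is to work from the standard characterization of flagness for $P_B$ via nested sets, which one can rephrase as follows: $P_B$ is flag if and only if for every non-singleton $b \in B$ and every partition $b = I_1 \sqcup \cdots \sqcup I_k$ with each $I_j \in B$ and $k \geq 2$, some pair $i \neq j$ satisfies $I_i \cup I_j \in B$. (For $k=2$ this holds automatically, since $I_1 \cup I_2 = b \in B$.) Both directions will be proved against this criterion.

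For the forward direction, assume $B$ is flag and fix a non-singleton $b \in B$. I would construct a binary decomposition by iterated merging. Start with the partition of $b$ into singletons (each lies in $B$). As long as the current partition of $b$ into $B$-elements has at least three parts, the flagness criterion produces two parts whose union lies in $B$, and merging them yields a coarser partition still consisting of $B$-elements. The process halts with a two-part partition $b = d_1 \sqcup d_2$, $d_1, d_2 \in B$. Applying the same construction recursively to the non-singleton parts builds a binary tree whose subtree-leaf-sets form a minimal flag building set on $b$ contained in $B$, i.e.\ a binary decomposition.

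For the backward direction, assume every non-singleton element of $B$ has a binary decomposition, and prove the flagness criterion by strong induction on $|b|$. The base case $|b|=2$ is trivial. For the inductive step, take a partition $b = I_1 \sqcup \cdots \sqcup I_k$ with $k \geq 3$ and let $b = d_1 \sqcup d_2$ be the top-level splitting from the assumed binary decomposition. If each $I_j$ lies inside $d_1$ or inside $d_2$, then at least one of $d_1, d_2$ is the disjoint union of two or more of the $I_j$'s; the inductive hypothesis applied to that strictly smaller element of $B$ then yields the required pair. In the remaining case some $I_j$ meets both $d_1$ and $d_2$, and the building-set union axiom forces $I_j \cup d_1, I_j \cup d_2 \in B$. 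I would then use these enlarged elements to either identify a pair within the original $I_1, \ldots, I_k$ whose union lies in $B$, or descend to a strictly smaller counter-example contradicting the inductive hypothesis.

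The main obstacle is this last ``straddling'' case in the backward direction: combining the $(d_1,d_2)$-split with a straddling $I_j$ and the union axiom to produce the desired conclusion calls for careful case analysis, most likely by iterating until no $I_j$ straddles and the previous case applies. The rest of the argument is a fairly clean inductive unwinding of the definitions.
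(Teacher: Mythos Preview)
The paper does not give its own proof of this proposition: it is quoted verbatim from \cite[Lemma 7.2]{ai} and no argument appears here. So there is nothing in the present paper to compare your attempt against, and your proposal must stand or fall on its own.

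Your reformulation of flagness (every $B$-partition of every non-singleton $b\in B$ admits a pair whose union lies in $B$) is correct and is equivalent to the nested-set criterion. Your forward direction is fine: iterated merging from the singleton partition produces a two-part split, and recursion gives the binary tree.

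The backward direction, however, has a real gap exactly where you flag it. In the ``straddling'' case your plan is to pass from the split $b=d_1\sqcup d_2$ to the enlarged elements $I_j\cup d_1$, $I_j\cup d_2$ and then ``iterate until no $I_j$ straddles.'' This does not work as stated: replacing $d_1$ by $I_j\cup d_1$ does not yield a new two-part $B$-split of $b$, because the complement $d_2\setminus I_j$ need not lie in $B$. Likewise, ``descending to a strictly smaller counter-example'' requires exhibiting a partition of some proper $b'\in B$ into at least three $B$-elements with no good pair, and the pieces you have in hand ($I_j\cap d_1$, $I_j\cap d_2$, etc.) are not known to be in $B$. In the extreme sub-case where \emph{every} $I_j$ meets both $d_1$ and $d_2$, the top-level split gives no traction at all, and your inductive scheme stalls.

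A clean way to close the gap is to use the whole binary decomposition $D$ of $b$, not just its top split. Call $d\in D$ \emph{monochromatic} if $d\subseteq I_j$ for some $j$; every singleton is monochromatic and $b$ is not. Choose a minimal non-monochromatic $v\in D$, with children $v_L,v_R\in D$; these are monochromatic, say $v_L\subseteq I_p$ and $v_R\subseteq I_q$ with $p\neq q$. Then $v\in B$, $v\subseteq I_p\cup I_q$, and two applications of the building-set axiom give
\[
I_p\cup I_q \;=\; (I_p\cup v)\cup I_q \;\in\; B,
\]
since $I_p\cap v\supseteq v_L\neq\emptyset$ and $(I_p\cup v)\cap I_q\supseteq v_R\neq\emptyset$. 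This produces the required pair directly, bypassing the straddling analysis.
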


\begin{prop}
\cite[Corollary 2.6 ]{ai}. A building set $B$ is flag if and only if for every non singleton $b \in B$, there exist two elements $d_1,~d_2 \in B$ such that $d_1 \cap d_2 = \emptyset$ and $d_1 \cup d_2 =b.$ \label{flagprop}
\end{prop}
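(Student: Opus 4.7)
The plan is to reduce Proposition \ref{flagprop} to Proposition \ref{flaglem} by showing that, for each non singleton $b \in B$, having a binary decomposition is equivalent to being expressible as a disjoint union of two elements of $B$. Proposition \ref{flaglem} says that $B$ is flag iff every non singleton $b \in B$ admits a binary decomposition $D \subseteq B$ (a minimal flag building set on $b$); so if I can translate the existence of such a $D$ into the purely local condition in Proposition \ref{flagprop}, I am done.

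For the forward direction, assume $B$ is flag, pick a non singleton $b \in B$, and let $D$ be a binary decomposition of $b$. The paper already defined the maximal components $d_1, d_2 \in D - \{b\}$. Because minimal flag building sets correspond to binary trees with leaf set $b$, the elements $d_1$ and $d_2$ are exactly the sets of descendants of the two children of the root. These subtrees partition the leaf set, so $d_1 \cap d_2 = \emptyset$ and $d_1 \cup d_2 = b$. Since $D \subseteq B$, both $d_1, d_2$ are in $B$, which is precisely the condition of Proposition \ref{flagprop}.

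For the converse, I would induct on $|b|$. If $b$ is a singleton, there is nothing to check. Otherwise, by hypothesis write $b = d_1 \sqcup d_2$ with $d_1, d_2 \in B$; since $|d_i| < |b|$, the inductive hypothesis applies to the restrictions $B|_{d_i}$ (each of which still satisfies the disjoint union condition), producing binary decompositions $D_i \subseteq B|_{d_i} \subseteq B$ of $d_i$ when $d_i$ is not a singleton, and otherwise taking $D_i = \{d_i\}$. I would then set $D := \{b\} \cup D_1 \cup D_2$ and check that $D$ is a minimal flag building set on $b$: concretely, $D$ corresponds to the binary tree obtained by gluing the trees for $D_1$ and $D_2$ at a new root labeled $b$, which by the characterization of minimal flag building sets recalled from \cite[Section 7.2]{prw} is indeed such a set. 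By Proposition \ref{flaglem}, $B$ is flag.

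The main obstacle is the verification step in the converse: I need to confirm that $\{b\} \cup D_1 \cup D_2$ really is a minimal flag building set, not just a flag building set. This boils down to checking that the tree-to-building-set correspondence behaves well under gluing at a new root, and to ruling out degenerate cases where an element of $D_1$ could coincide with or interact badly with an element of $D_2$ — which cannot happen because $d_1 \cap d_2 = \emptyset$ forces every element of $D_1$ to be disjoint from every element of $D_2$. Once this disjointness is invoked, the tree structure transfers cleanly and minimality follows from the minimality of $D_1$ and $D_2$.
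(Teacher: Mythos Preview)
The paper does not prove Proposition \ref{flagprop}; it simply cites it from \cite[Corollary 2.6]{ai}, so there is no in-paper argument to compare against. Your reduction to Proposition \ref{flaglem} via the binary-tree description of minimal flag building sets is correct: the forward direction reads off the two maximal components of a binary decomposition, and the converse builds a binary decomposition recursively by gluing the trees for $D_1$ and $D_2$ at a new root. One minor simplification: in the inductive step you need not pass to the restrictions $B|_{d_i}$ at all, since $d_1,d_2 \in B$ are non singleton elements of $B$ of strictly smaller cardinality than $b$, so the inductive hypothesis applied directly in $B$ already furnishes binary decompositions $D_i \subseteq B$; the disjointness $d_1 \cap d_2 = \emptyset$ then guarantees, as you note, that $\{b\} \cup D_1 \cup D_2$ corresponds to a genuine binary tree on $b$ and hence is a minimal flag building set.
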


It follows from Proposition \ref{flagprop} that a graphical building set is flag.

\begin{lemma}
\cite[Lemma 2.7]{ai}. Suppose $B$ is a flag building set. If $a,b \in B$ and $a \subsetneq b$, then there is a decomposition of $b$ in $B$ that contains $a$. \label{decomp}
\end{lemma}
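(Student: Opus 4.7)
The plan is to prove the lemma by induction on the cardinality difference $|b|-|a|$.

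The base case $|b|-|a|=1$ is immediate: writing $b=a\cup\{x\}$, take any decomposition $D_a$ of $a$ (obtained from Proposition~\ref{flaglem} if $a$ is non-singleton, or $\{a\}$ if $a$ is a singleton); then $\{b\}\cup D_a\cup\{\{x\}\}$ is a decomposition of $b$ inside $B$ that contains $a$, corresponding to the binary tree whose root $b$ has the $a$-subtree and the leaf $x$ as its two children.

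For the inductive step $|b|-|a|\geq 2$, I would split on whether there exists an intermediate $B$-element strictly between $a$ and $b$. If $a'\in B$ satisfies $a\subsetneq a'\subsetneq b$, then both $|a'|-|a|$ and $|b|-|a'|$ are strictly smaller than $|b|-|a|$, so the inductive hypothesis yields a decomposition $D_{a'}$ of $a'$ containing $a$ and a decomposition $D_b$ of $b$ containing $a'$. In the binary-tree picture the elements of $D_b$ contained in $a'$ form a subtree of $D_b$ rooted at $a'$; replacing this subtree by the one for $D_{a'}$ yields a decomposition of $b$ inside $B$ that contains $a$.

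Otherwise $a$ is a coatom of $b$ in the $B$-poset. Apply Proposition~\ref{flagprop} to $b$ to get $e_1,e_2\in B$ with $e_1\cap e_2=\emptyset$ and $e_1\cup e_2=b$. If $a\subseteq e_i$ for some $i$, the coatom hypothesis forces $a=e_i$ and one finishes by adjoining decompositions of $a$ and $e_{3-i}$. If $a$ straddles with neither $e_i$ contained in $a$, then $a\cup e_1\in B$ is strictly between $a$ and $b$, contradicting the coatom hypothesis. So, up to swapping, $e_1\subsetneq a$, the set $a\cap e_2$ is a proper non-empty subset of $e_2$, and $a\cup e_2=b$, whence $b\setminus a=e_2\setminus a$.

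This residual configuration is the main obstacle. To resolve it, apply Proposition~\ref{flagprop} to $e_2$, writing $e_2=g_1\sqcup g_2$ with $g_1,g_2\in B$, and analyse how $a\cap e_2$ sits in the pair $(g_1,g_2)$. If $a\cap e_2=g_j$ for some $j$, then $g_{3-j}=e_2\setminus a=b\setminus a$, so $b\setminus a\in B$ and $(a,b\setminus a)$ is the top-level split of the desired decomposition of $b$. If $a\cap e_2$ is a proper subset of one $g_j$, then $a\cup g_j\in B$ with $a\subsetneq a\cup g_j\subsetneq b$, again contradicting the coatom hypothesis. In the remaining straddling sub-case, one $g_j$, say $g_1$, must be contained in $a$ (else the same union-axiom argument produces a strict intermediate), and then $g_2$ plays the same role as $e_2$ did but is strictly smaller, so the analysis recurses. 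Termination is guaranteed by the strict decrease of cardinality, so eventually $b\setminus a$ emerges as an element of $B$; the decomposition of $b$ is then completed by adjoining decompositions of $a$ and $b\setminus a$ in $B|_a$ and $B|_{b\setminus a}$.
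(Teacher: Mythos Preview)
The paper does not actually prove this lemma; it merely cites \cite[Lemma 2.7]{ai} and states the result without argument. So there is no ``paper's own proof'' to compare against here.

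Your argument is correct. The base case and the intermediate-element case are straightforward, and your handling of the coatom case is the substantive part: you repeatedly split the ``overhang'' piece $f$ (initially $e_2$) via Proposition~\ref{flagprop} into $g_1\sqcup g_2$, and the trichotomy on how $a\cap f$ sits relative to $(g_1,g_2)$ is exhaustive. In the straddling sub-case the union axiom indeed forces one $g_j\subseteq a$ (otherwise $a\cup g_j$ would be a strict intermediate), and the invariants $b\setminus a=f\setminus a$ and ``$a\cap f$ is a proper nonempty subset of $f$'' pass to the smaller piece $g_{3-j}$, so the recursion terminates with $b\setminus a\in B$. Two small points worth making explicit in a final write-up: first, at each recursive step $f$ is non-singleton because $a\cap f$ is a proper nonempty subset of it, so Proposition~\ref{flagprop} applies; second, in the ``proper subset of one $g_j$'' case you use $a\cap g_j\neq\emptyset$ to invoke the union axiom, which holds since $a\cap f\neq\emptyset$ and $a\cap f\subseteq g_j$. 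With those remarks your proof is complete and self-contained.
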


Recall the following theorems of Volodin \cite{vol}:

\begin{thm}\cite[Lemma 6]{vol}.
Let $B$ and $B'$ be connected flag building sets on $S$ such that $B \subseteq B'$. Then $B'$ can be obtained from $B$ by successively adding elements so that at each step the set is a flag building set. \label{flagbuild}
\end{thm}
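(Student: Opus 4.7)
The plan is to prove this by induction on $|B' \setminus B|$. The base case $B = B'$ is trivial, and for the inductive step it suffices to exhibit a single $b \in B' \setminus B$ such that $B \cup \{b\}$ is again a flag building set; one can then apply the hypothesis to $B \cup \{b\} \subseteq B'$. For $B \cup \{b\}$ to be a flag building set two conditions must hold: (I) $c \cup b \in B$ whenever $c \in B$ satisfies $c \cap b \neq \emptyset$ and $c \not\subseteq b$, so that $B \cup \{b\}$ is closed under unions of intersecting elements (noting that $c \cup b \neq b$ in this situation), and (II) there exist $d_1, d_2 \in B$ with $d_1 \sqcup d_2 = b$, so that $b$ decomposes in $B \cup \{b\}$; by Proposition \ref{flagprop} this is all that remains for flagness, since every element of $B$ already decomposes in $B$.

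My choice is to take $b$ to be a maximal element, under inclusion, of the set of $b' \in B' \setminus B$ which satisfy (II). This set is nonempty because every inclusion-minimal element $b_0$ of $B' \setminus B$ satisfies (II): by flagness of $B'$, there is a decomposition $b_0 = d_1 \sqcup d_2$ in $B'$ with $d_1, d_2 \subsetneq b_0$, and minimality of $b_0$ in $B' \setminus B$ forces $d_1, d_2 \in B$. Such a $b$ satisfies (II) by construction; the crux is to verify (I) for this particular choice.

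Suppose for contradiction that some $c \in B$ has $c \cap b \neq \emptyset$, $c \not\subseteq b$, and $c \cup b \notin B$; fix disjoint $d_1, d_2 \in B$ with $d_1 \sqcup d_2 = b$. Since $c$ meets $b$ it meets $d_1$ or $d_2$. If $c$ meets exactly one of them, say $d_1$, then $c \cup d_1 \in B$ by the building set axiom for $B$, while $(c \cup d_1) \cap d_2 = \emptyset$, so $(c \cup d_1) \sqcup d_2 = c \cup b$ witnesses condition (II) for $c \cup b$; since $c \cup b \supsetneq b$ and $c \cup b \in B' \setminus B$, this contradicts maximality of $b$. If instead $c$ meets both $d_1$ and $d_2$, then $c \cup d_1$ and $c \cup d_2$ both lie in $B$ and their intersection equals $c \cup (d_1 \cap d_2) = c \neq \emptyset$, so $(c \cup d_1) \cup (c \cup d_2) = c \cup b$ lies in $B$ by the building set axiom, contradicting our assumption $c \cup b \notin B$.

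The main subtlety is finding the right extremality condition on $b$: an inclusion-minimal $b$ in $B' \setminus B$ automatically satisfies (II) but can fail (I) (since some $c \cup b$ might lie strictly higher in $B' \setminus B$), while an inclusion-maximal $b$ in $B' \setminus B$ satisfies (I) but need not satisfy (II). Choosing $b$ maximal among those $b'$ that already satisfy (II) is what makes the two-case argument above close, and this is where the work sits.
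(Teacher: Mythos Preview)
The paper does not give its own proof of this theorem; it simply cites \cite[Lemma~6]{vol} and states the result without argument. So there is no in-paper proof to compare against.

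That said, your argument is correct and complete. The two conditions (I) and (II) you isolate are exactly what is needed for $B \cup \{b\}$ to be a connected flag building set (connectedness is automatic since $S \in B$), and your choice of $b$ as inclusion-maximal among elements of $B' \setminus B$ that already decompose in $B$ is the right extremality condition. The nonemptiness check via an inclusion-minimal $b_0 \in B' \setminus B$ is fine (note $b_0$ is not a singleton since all singletons lie in $B$), and the two-case contradiction for (I) is clean: in the first case $c \cup b$ becomes a strictly larger element of $B' \setminus B$ satisfying (II), violating maximality; in the second case the building-set axiom for $B$ forces $c \cup b \in B$ directly. Your closing remark about why neither pure minimality nor pure maximality in $B' \setminus B$ suffices is also accurate and worth keeping.
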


\begin{thm}\cite[Corollary 1]{vol}. See also \cite[Lemma 3.3]{ai}.
If $B'$ is a flag building set on $S$ obtained from a flag building set $B$ on $S$ by adding an element $b$ then

\begin{align*}\gamma(B') =& \gamma(B) + t\gamma(B'|_b)\gamma(B'/b)\\
=& \gamma(B) + t\gamma(B|_b)\gamma(B/b).\end{align*}
\label{vollem}
\end{thm}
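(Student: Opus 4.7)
The plan is to view the passage from $B$ to $B' = B \cup \{b\}$ geometrically as a truncation of $P_B$, and read off the $\gamma$-recursion from the effect of this truncation. Using the nested-set description of faces of a nestohedron from \cite[Section 7]{po}, one sees that $P_{B'}$ is obtained from $P_B$ by truncating a face $F$ which is combinatorially isomorphic to $P_{B|_b} \times P_{B/b}$; the new facet introduced (corresponding to the added element $b$) is combinatorially $P_{B'|_b} \times P_{B'/b}$. The flag hypothesis on $B'$, combined with Proposition \ref{flagprop}, forces $b = b_1 \sqcup b_2$ with $b_1, b_2 \in B$, which in turn forces $\{b_1, b_2\}$ to be the maximal elements of $B|_b$; hence $F$ has codimension exactly $2$ in $P_B$ and the truncation is of the simplest type.

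First I would establish the $h$-polynomial identity
\[
h(P_{B'})(t) = h(P_B)(t) + t\, h(P_{B|_b})(t)\, h(P_{B/b})(t),
\]
either from the standard truncation formula for simple polytopes at a codimension-$2$ face, or directly by a bijection on nested sets: those $B'$-nested sets avoiding $b$ correspond to $B$-nested sets, while those containing $b$ correspond to pairs consisting of a $B|_b$-nested set and a $B/b$-nested set, the factor of $t$ arising from the added element $b$ itself.

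Next I would pass from $h$ to $\gamma$. Both sides are $h$-polynomials of flag homology spheres, so the Dehn--Somerville expansion $h = \sum \gamma_i t^i (1+t)^{\dim - 2i}$ applies, and multiplicativity $\gamma(A \times B) = \gamma(A) \gamma(B)$ on products of simple polytopes lets one read off
\[
\gamma(P_{B'})(t) = \gamma(P_B)(t) + t\, \gamma(P_{B|_b})(t)\, \gamma(P_{B/b})(t).
\]

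For the second form of the recursion, observe that $B'/b = B/b$ (the only new element $b$ is discarded in the contraction because $b \subseteq b$), and $P_{B'|_b} \cong P_{B|_b} \times \Delta^1$ (adjoining the top element $b$ to the disconnected building set $B|_b$, which by the previous paragraph has exactly two maximal components, produces combinatorially a prism over $P_{B|_b}$); since $\gamma(\Delta^1) = 1$ this gives $\gamma(B'|_b) = \gamma(B|_b)$, and the two versions of the recursion agree. The main obstacle is the $h$-polynomial identity itself: its cleanest self-contained proof goes via the nested-set bijection together with careful tracking of face dimensions under restriction and contraction of building sets, rather than by invoking a general polytope-truncation formula whose codimension bookkeeping requires the same analysis in disguise.
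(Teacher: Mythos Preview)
The paper does not supply a proof of this statement; it is quoted from \cite{vol} (see also \cite{ai}) as a preliminary result and invoked later in the proof of Theorem~\ref{fvectthm}. Your sketch is essentially Volodin's argument: the passage $B\to B'$ is realised geometrically as truncation of $P_B$ along a codimension-two face combinatorially isomorphic to $P_{B|_b}\times P_{B/b}$, giving the $h$-recursion and then the $\gamma$-recursion. Your derivation of the second displayed equality, via $B'/b=B/b$ and $P_{B'|_b}\cong P_{B|_b}\times\Delta^1$ (so $\gamma(B'|_b)=\gamma(B|_b)$), is correct.

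One step you should make explicit. The truncation picture, and in particular the dimension identity $\dim P_{B|_b}+\dim P_{B/b}=\dim P_B-2$ needed to convert the $h$-recursion into the $\gamma$-recursion, relies on $b$ being contained in a single maximal element of $B$ (equivalently, $B_{\max}=B'_{\max}$). Your argument that $b_1,b_2$ are the maximal elements of $B|_b$ is fine, but if $b_1,b_2$ happen to be maximal in $B$ itself they do not index facets of $P_B$, and $P_{B'}$ is then a prism over $P_B$ rather than a truncation; in that case the $\gamma$-identity actually fails as written (take $S=\{1,2\}$, $B=\{\{1\},\{2\}\}$, $b=\{1,2\}$: then $\gamma(B')=1$ but $\gamma(B)+t\gamma(B|_b)\gamma(B/b)=1+t$). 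This is harmless for the paper's purposes, since in every application $B$ contains the chosen minimal connected flag building set $D$ and is therefore connected, forcing $b\subsetneq S\in B$; but your write-up should either add this hypothesis or note why it is automatic in context.
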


\begin{section}{The flag complex $\Gamma(B)$ of a flag building set $B$}

For a building set $B$ with maximal components $B_{max} = \{b_1,...,b_{\alpha}\},$ let $B_i = B|_{b_i}$ for $i = 1,..,\alpha$. Then we have

$$P_{B} = P_{B_1} \times P_{B_2}\times \cdots \times P_{B_{\alpha}}$$

\noindent which implies that if $\gamma(B_i) = f(\Gamma(B_i))$ for some flag complex $\Gamma(B_i)$, then

$$\gamma(B) =\gamma(B_1)\gamma(B_2)\cdots \gamma(B_{\alpha})= f(\Gamma(B_1)*\Gamma(B_2)*\cdots *\Gamma(B_{\alpha})).$$ Hence to prove Theorem \ref{mainthm} we need only consider connected flag building sets. \\

Suppose that $B$ is a connected flag building set on $[n]$, $D$ is a decomposition of $[n]$ in $B$, and $b_1,b_2,...,b_k$ is an ordering of $B-D$, such that $B_j=D\cup\{b_1,b_2,...,b_j\}$ is a flag building set for all $j$. (Such an ordering exists by Theorem \ref{flagbuild}). We call the pair consisting of such a decomposition $D$ and the ordering on $B-D$ a \emph{flag ordering} of $B$, denoted $O$, or $(D,b_1,....,b_k)$. For any $b_j \in B -D$, we say an element in $B_{j-1}$ is \emph{earlier} in the flag ordering than $b_j$, and an element in $B - B_j$ is \emph{later} in the flag ordering than $b_j$.\\

For any $j \in [k]$ define

$$U_j : =\{i~|~i<j, b_i \not \subseteq b_j,~\hbox{there is no}~ b \in B_{i-1}~\hbox{such that}~b \backslash b_j = b_i \backslash b_j\},$$ and
$$V_j: = \{i~|~i<j,~b_i \subseteq b_j, \exists ~b \in B_{i-1} ~\hbox{such that}~b_i \subsetneq b \subsetneq b_j \}.$$ If $i \in U_j \cup V_j$ then we say that $b_i$ is \emph{non degenerate} with respect to $b_j$. If $b_i \in B_{j-1}$ and $i \not \in U_j \cup V_j$ then $b_i$ is \emph{degenerate} with respect to $b_j$. Degenerate elements with respect to $b_j$ that are not contained in $b_j$ are elements that we need not consider as contributing to the building set $B_j/b_j$. The set of degenerate elements with respect to $b_j$ that are subsets of $b_j$, together with $b_j$, forms a decomposition of $b_j$ in $B_j|b_j$.\\

Given a flag building set $B$ with flag ordering $O = (D,b_1,...,b_k)$ define a graph on the vertex set $$V_O=\{v(b_1),...,v(b_k)\},$$ where for any $i<j$, $v(b_i)$ is adjacent to $v(b_j)$ if and only if $i \in U_j \cup V_j$. Then define a flag simplicial complex $\Gamma(O)$ whose faces are the cliques in this graph. If the flag ordering is clear then we denote $\Gamma(O)$ by $\Gamma(B)$. For any $I \subseteq [k]$, we let $\Gamma(O)|_{I}$ denote the induced subcomplex of $\Gamma(O)$ on the vertices $v(b_i)$ for all $i \in I$. \\

\begin{example}
Consider the flag building set $B(Path_5)$ on $[5]$. It has a flag ordering $O$ given by

$$D = \{\{1\},\{2\},\{3\},\{4\},\{5\},[2],[3],[4],[5]\},$$ and

$$b_1 = \{3,4\},~b_2 = \{2,3,4\},~b_3 =\{2,3\},~b_4 =\{2,3,4,5\},~b_5 = \{3,4,5\},~b_6 = \{4,5\}.$$ Then $\Gamma(O)$ has only two edges, namely

$$\{v(b_2),v(b_6)\}~\hbox{and}~\{v(b_3),v(b_4)\}.$$ These are edges because $b_2 =\{2,3,4\}$ is the earliest element which has image $\{2,3\}$ in the contraction by $b_6$, and the element $b_3 = \{2,3\}$ is a subset of $b_2 =\{2,3,4\}$ which is in turn a subset of $b_4$.\\

\end{example}

Now $D/b_k$ is a decomposition of $[n] -b_k$, and we have an induced ordering of $(B/b_k)-(D/b_k)$, where the $i$th element is $b_{u_i}':=b_{u_i} \backslash b_k$ if $u_i$ is the $i$th element of $U_k$ (listed in increasing order). Then for all $i$, $D/b_k \cup \{b_{u_1}',...,b_{u_i}'\}$ is a flag building set. Hence we can also define a flag complex $\Gamma(B/b_k)$. We label the vertices of $\Gamma(B/b_k)$ by $v(b_{u_1}'),v(b_{u_2}'),...,v(b_{u_{|U_k|}}')$. \\

\end{section}

\begin{claim}
Let $B$ be a connected flag building set with flag ordering $(D,b_1,...,b_k)$. For all $b \in B$ let $b' = b \backslash b_k$. If $b' \ne \emptyset$, $j \in U_k$, and $b \in B_{j-1}$ then $b \subseteq b_j$ if and only if $b' \subseteq b_j'$.\label{incprop}
\end{claim}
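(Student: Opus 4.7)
The forward implication $b\subseteq b_j \Rightarrow b'\subseteq b_j'$ is immediate: set-minus with a fixed set ($b_k$) preserves inclusions. So the content is in the reverse direction, and my plan is to prove it by contradiction, using the defining condition of $U_k$.

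Assume $b'\subseteq b_j'$, $b'\ne\emptyset$, $j\in U_k$, and $b\in B_{j-1}$. I would first argue that $b$ and $b_j$ must intersect: since $b'\subseteq b_j'\subseteq b_j$ and $b'\subseteq b$, we have $\emptyset\ne b'\subseteq b\cap b_j$. Suppose for contradiction that $b\not\subseteq b_j$. Then I would appeal to the fact that $B_j=B_{j-1}\cup\{b_j\}$ is itself a flag (in particular, a) building set, together with $b,b_j\in B_j$ and $b\cap b_j\ne\emptyset$, to conclude $b\cup b_j\in B_j$. Since $b\not\subseteq b_j$, the union $b\cup b_j$ strictly contains $b_j$, hence is distinct from $b_j$, and therefore $b\cup b_j\in B_j\setminus\{b_j\}=B_{j-1}$.

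Now I would compute
\[
(b\cup b_j)\setminus b_k \;=\; (b\setminus b_k)\cup(b_j\setminus b_k) \;=\; b'\cup b_j' \;=\; b_j',
\]
the last equality using the assumption $b'\subseteq b_j'$. Thus $b\cup b_j$ is an element of $B_{j-1}$ whose set-difference with $b_k$ equals $b_j\setminus b_k$, directly contradicting the hypothesis $j\in U_k$. This contradiction yields $b\subseteq b_j$, finishing the reverse direction.

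The only nontrivial step is the intermediate observation that $b\cup b_j\in B_{j-1}$, which I expect to be the main (though mild) point to get right: it rests on combining the building-set axiom applied in $B_j$ with the fact that $b\cup b_j\ne b_j$. Everything else is bookkeeping about set-differences and the definition of $U_k$, so no induction or further machinery from Section 2 (beyond the building-set union axiom and the standing assumption that every $B_i$ is a building set) should be needed.
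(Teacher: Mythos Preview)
Your argument is correct and follows essentially the same route as the paper's own proof: forward direction trivial, reverse by contradiction using $b\cup b_j\in B_{j-1}$ (via the building-set axiom in $B_j$ and $b\cup b_j\neq b_j$) together with $(b\cup b_j)\setminus b_k=b_j\setminus b_k$ to violate $j\in U_k$. If anything, you are slightly more explicit than the paper in justifying $b\cap b_j\neq\emptyset$ and in deducing $b\cup b_j\in B_{j-1}$ rather than merely $B_j$.
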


\begin{proof}
$\Rightarrow:$ It is clear that $b \subseteq b_j$ implies $b' \subseteq b_j'$. \\

$\Leftarrow:$ Suppose for a contradiction that $b' \subseteq b_j'$ and $b \not \subseteq b_j$. Then $b \cap b_j \ne \emptyset$ and $b \cup b_j \ne b_j$, which implies that (since $B_j$ is a building set) $b \cup b_j \in B_{j-1}$. We also have that $(b \cup b_j)' =b_j'$, which implies that $b_j$ is degenerate with respect to $b_k$, a contradiction.
\end{proof}

\begin{prop}
Let $B$ be a connected flag building set with flag ordering given by $(D,b_1,...,b_k)$. Then $\Gamma(B/b_k) \cong \Gamma(B)|_{U_k}$. The map on the vertices is given by $v(b_i') \mapsto v(b_i)$.\label{contprop}
\end{prop}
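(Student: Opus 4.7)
The plan is to show that $\Gamma(B/b_k)$ and $\Gamma(B)|_{U_k}$ have the same edges under the bijection $v(b_i') \leftrightarrow v(b_i)$ for $i \in U_k$, which suffices since both are flag complexes on this common vertex set. Fix indices $i < j$ in $U_k$. Applying Claim \ref{incprop} to $b = b_i \in B_{j-1}$ (with $b_i' \ne \emptyset$ since $i \in U_k$), we have $b_i \subseteq b_j$ if and only if $b_i' \subseteq b_j'$, so the adjacency conditions split identically into two subcases: the $V$-type condition ($b_i \subseteq b_j$, asking for a strictly intermediate element) and the $U$-type condition ($b_i \not\subseteq b_j$, asking about contraction by $b_j$).

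For the $V$-type subcase, both directions follow from Claim \ref{incprop} and the non-degeneracy hypotheses. A witness $b \in B_{i-1}$ with $b_i \subsetneq b \subsetneq b_j$ contracts to $b \setminus b_k$ strictly between $b_i'$ and $b_j'$, where strictness is preserved because equality with either endpoint would produce a lift in $B_{i-1}$ or $B_{j-1}$ violating $i \in U_k$ or $j \in U_k$. Conversely, given a strictly intermediate $b' \in B_{i-1}/b_k$ and any lift $\hat{b} \in B_{i-1}$, Claim \ref{incprop} gives $\hat{b} \subseteq b_j$, after which $\hat{b} \cup b_i$ lies in $B_{i-1}$ (it intersects $b_i$ nontrivially in $b_i'$, and is not equal to $b_i$ since $\hat{b}' \supsetneq b_i'$) and serves as the required strictly intermediate witness.

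The $U$-type subcase is the main work. The forward direction is a direct contraction: a witness $b \in B_{i-1}$ with $b \setminus b_j = b_i \setminus b_j$ must satisfy $b \not\subseteq b_k$ (else $b_i' \subseteq b_j'$), so $b \setminus b_k \in B_{i-1}/b_k$ is the sought image. For the reverse direction, given $b' \in B_{i-1}/b_k$ with $b' \setminus b_j' = b_i' \setminus b_j'$ and any lift $\hat{b}_0 \in B_{i-1}$, the plan is to produce $b \in B_{i-1}$ via Lemma \ref{decomp}. If $\hat{b}_0 \subseteq b_i$, take a flag decomposition of $b_i$ in $B_i$ containing $\hat{b}_0$ and let $\tau \in B_{i-1}$ be the direct sibling at the top level of the component containing $\hat{b}_0$. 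If $\hat{b}_0 \not\subseteq b_i$, then $\tilde{b} := \hat{b}_0 \cup b_i$ lies in $B_{i-1}$ and strictly contains $b_i$; take a decomposition of $\tilde{b}$ in $B_i$ containing $b_i$ and let $\tau \in B_{i-1}$ be the direct sibling of $b_i$ in the tree.

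The key structural observation, from an element-chase using $b' \setminus b_j' = b_i' \setminus b_j'$, is that $\tau \setminus b_k \subseteq b_j'$. If this were empty, $\tau$ would be contained in $b_k$, and then the complementary component of the decomposition would be an element of $B_{i-1}$ whose $b_k$-contraction equals $b_i'$, contradicting $i \in U_k$. Hence $\tau \setminus b_k$ is nonempty, so Claim \ref{incprop} gives $\tau \subseteq b_j$, and the complementary component of the decomposition is the required witness $b \in B_{i-1}$ satisfying $b \setminus b_j = b_i \setminus b_j$. The main obstacle is precisely this sibling analysis in the reverse direction of the $U$-type subcase, where the flag decomposition, the lift of the witness, and the non-degeneracy $i \in U_k$ must all be combined through Claim \ref{incprop}.
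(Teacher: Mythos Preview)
Your overall strategy is sound and the $V$-type subcase and the forward $U$-type direction match the paper essentially verbatim. For the reverse $U$-type direction your argument is actually more economical than the paper's: where the paper does a region analysis (the sets $R,J$ when $m\subseteq b_i$ and $H,K,L$ together with a minimality argument when $m\not\subseteq b_i$), you instead pick a single element $\tau$ from a suitable decomposition, verify $\tau'\subseteq b_j'$ directly, and then invoke Claim~\ref{incprop} once. This collapses the paper's three-way case split in each subcase into a dichotomy $\tau'=\emptyset$ versus $\tau'\ne\emptyset$, which is a genuine simplification.

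There is, however, one concrete slip in your write-up of the $\hat b_0\not\subseteq b_i$ subcase. You define $\tau$ as the sibling of $b_i$ in a decomposition of $\tilde b=\hat b_0\cup b_i$, and then twice appeal to ``the complementary component of the decomposition'' as an element of $B_{i-1}$. But at the level where $\tau$ sits, the complementary component is $b_i$ itself, which is \emph{not} in $B_{i-1}$. The element you actually want in both places is the \emph{parent} $b_i\cup\tau$: it lies in $B_i\setminus\{b_i\}=B_{i-1}$ (since $\tau\ne\emptyset$ and $\tau\cap b_i=\emptyset$), and one checks $(b_i\cup\tau)\setminus b_k=b_i'$ when $\tau'=\emptyset$, while $(b_i\cup\tau)\setminus b_j=b_i\setminus b_j$ when $\tau\subseteq b_j$. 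With this correction your argument goes through. In the $\hat b_0\subseteq b_i$ subcase your ``complementary component'' really is the other maximal component $d$ and the argument is fine as written; it is only in the second subcase that the terminology misfires.
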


\begin{proof}
$\Gamma(B)|_{U_k}$ is a flag complex with vertex set $v(b_{u_1}), v(b_{u_2}),...,v(b_{u_{|U_k|}})$ and $\Gamma(B/b_k)$ is a flag complex with vertex set $v(b_{u_1}'), v(b_{u_2}'),...,v(b_{u_{|U_k|}}')$. Suppose that $i<j$ where $i,j \in U_k$. We need to show that $\{v(b_j'),v(b_{i}')\} \in \Gamma(B/b_k)$ if and only if $\{v(b_j),v(b_{i})\} \in \Gamma(B)|_{U_k}$.  We will show the following:

\begin{enumerate}
\item If $b_{i} \subseteq b_j$ (by Claim \ref{incprop}, equivalently $b_{i}' \subseteq b_j'$) then $\{v(b_j'),v(b_{i}')\} \in \Gamma(B/b_k)$ if and only if $\{v(b_j),v(b_{i})\} \in \Gamma(B)|_{U_k}$.\\
\item If $b_{i} \not \subseteq b_j$ (by Claim \ref{incprop}, equivalently $b_{i}' \not \subseteq b_j'$) then $\{v(b_j'),v(b_{i}')\} \in \Gamma(B/b_k)$ if and only if $\{v(b_j),v(b_{i})\} \in \Gamma(B)|_{U_k}$.\\
\end{enumerate}

(1) $\Rightarrow:$ Suppose that $\{v(b_j'),v(b_{i}')\} \in \Gamma(B/b_k)$, so that there exists $b \in B_{i-1}$ such that $b_{i}' \subsetneq b' \subsetneq b_j'$. By Claim \ref{incprop}, $b \subseteq b_j$ and since $b_{i} \subseteq b_j$ this implies $b \cup b_{i} \subseteq b_j$. Since $b \cap b_{i} \ne \emptyset$ we have $b \cup b_{i} \in B_{i-1}$. Hence $b_{i} \subsetneq b \cup b_{i} \subsetneq b_j$ which implies $\{v(b_{i}),v(b_j)\} \in \Gamma(B)|_{U_k}$.\\

$\Leftarrow:$ Suppose $\{v(b_{i}),v(b_j)\} \in \Gamma(B)|_{U_k}$, so that there exists $b \in B_{i-1}$ such that $b_{i} \subsetneq b \subsetneq b_j$. Then $b_{i}' \subseteq b' \subseteq b_j'$, and $b' \ne b_{i}'$ or $b_j'$ since $i, j \in U_k$, so that $b_{i}' \subsetneq b' \subsetneq b_j'$. Hence $\{v(b_{i}'),v(b_j')\} \in \Gamma(B/b_k)$.\\

(2) $\Rightarrow:$ Suppose that $\{v(b_{i}'),v(b_j')\} \in \Gamma(B/b_k)$, and suppose for a contradiction that $\{v(b_{i}),v(b_j)\} \not \in \Gamma(B)|_{U_k}$. Then there exists $b \in B_{i-1}$ such that $b \backslash b_j = b_{i} \backslash b_j$. Then $b' \backslash b_j' = b_{i}' \backslash b_j'$ which implies the contradiction that $\{v(b_{i}'),v(b_j')\} \not \in \Gamma(B/b_k)$.\\

$\Leftarrow:$ We will prove the contrapositive that $\{v(b_{i}'),v(b_j')\} \not \in \Gamma(B/b_k)$ implies that $\{v(b_{i}),v(b_j)\} \not \in \Gamma(B)|_{U_k}$. $\{v(b_{i}'),v(b_j')\} \not \in \Gamma(B/b_k)$ implies there exists $m \in B_{i-1}$ such that $m'\backslash b_j' = b_{i}'\backslash b_j'$. \\

\begin{itemize}
\item Assume that $m \subseteq b_{i}$, and for this case refer to Figure \ref{contrsubset}. Let $R: = b_k \cap (b_{i} \backslash (m \cup b_j))$, and let $J: = b_{i} \backslash (m \cup b_k)$. Since $m \subseteq b_{i}$, by Lemma \ref{decomp} there exists a decomposition of $b_{i}$ in $B_{i}$ that contains $m$. Hence $m$ is contained in a maximal component $d$ of this decomposition. Let $d'$ be the other maximal component. If $d' \cap R = \emptyset$ then $\{v(b_{i}),v(b_j)\} \not \in \Gamma(B)|_{U_k}$ since $d \backslash b_j = b_{i} \backslash b_j$, hence the desired condition holds. If $d' \cap J = \emptyset$ then $b_{i} \backslash b_k = d \backslash b_k$ which contradicts $i \in V_k$. If $d' \cap J \ne \emptyset$ and $d' \cap R \ne \emptyset$ then $(d' \cup b_j) \backslash b_k = b_j \backslash b_k$ which contradicts $j \in V_k$.\\

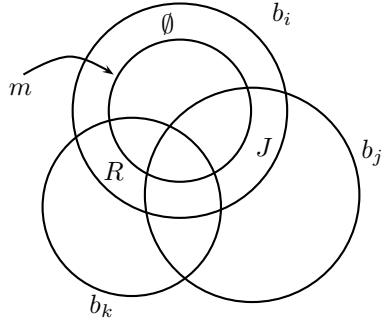
\begin{figure}[H]
\caption{A picture of the sets in case (2), assuming $m \subseteq b_{i}$. Note that $b_{i}\backslash (m \cup b_j \cup b_k) = \emptyset$ by the definition of $m$.}
\label{contrsubset}

\[
\psset{unit=0.8cm}
\begin{pspicture}(4,0)(8,5.5)
\pscircle(5.8,3.4){1.8}\rput(7.5,5){$b_{i}$}
\pscurve{->}(3.2,4)(3.975,4.3)(4.7,4)
\pscircle(5.8,3.4){1.2}\rput(3.15,3.75){$m$}
\pscircle(7,2){1.8}\rput(9,2.7){$b_j$}
\pscircle(5,1.8){1.5}\rput(4.5,.15){$b_k$}
\rput(5.6,4.85){$\emptyset$}
\rput(7.2,2.8){$J$}
%\rput(5.7,1.9){$J$}
\rput(4.7,2.4){$R$}
\end{pspicture}
\]
\end{figure}

\item Assume that $m \not \subseteq b_{i}$. For this case refer to Figure \ref{contrnotsubset}. Let $H : =b_{i} \backslash (b_j \cup b_k)$. In $(B_j/b_k)/b_j'$ both $b_{i}'$ and $m'$ have the same image that is given by $H$, and $H \ne \emptyset$ since $H = \emptyset$ implies $b_{i}' \subseteq b_j'$. Let $K : = m\backslash (b_k \cup b_{i})$. Then $K \ne \emptyset$ since $K = \emptyset$ implies $b_{i} \backslash b_k = m  \backslash b_k$, which contradicts $i \in V_k$. Let $L : =m \backslash (b_{i} \cup b_j)$. $L = \emptyset$ implies $\{v(b_{i}),v(b_j)\} \not \in \Gamma(B)|_{U_k}$ since $m \backslash b_j = b_{i} \backslash b_j$, so the desired condition holds. Suppose now $L \ne \emptyset$. Then $m$ intersects each of $H,K$ and $L$. Let $b$ be a minimal (for inclusion) element in in $B_{i-1}$ that intersects $H,K$ and $L$. Then $|b| \ge 3$ and at least one of the elements in the decomposition of $b$ (in $B_{i-1}$) must intersect exactly two of $K,H$ and $L$. Denote such an element by $\hat d$. If $ \hat d$ intersects $K$ and $L$ then $(b_j \cup \hat d) \backslash b_k = b_j \backslash b_k$ which contradicts $j \in V_k$. If $\hat d$ intersects both $K$ and $H$ then $\{v(b_{i}),v(b_j)\} \not \in \Gamma(B)|_{U_k}$ since $(b_{i} \cup \hat d) \backslash b_j = b_{i} \backslash b_j$, so the desired condition holds. If $\hat d$ intersects $L$ and $H$ then $(b_{i} \cup \hat d) \backslash b_k =b_{i} \backslash b_k$, which contradicts $i \in V_k$.

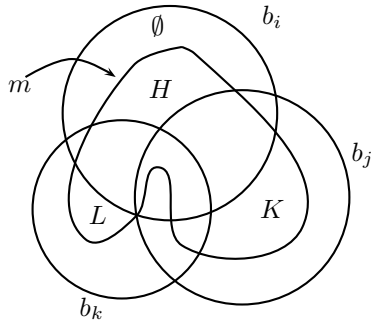
\begin{figure}[H]
\caption{A picture of the sets in case (2), assuming $m \not \subseteq b_{i}$. Note that $b_{i} \backslash ( m \cup b_j \cup  b_k) = \emptyset$ by the definition of $m$.}
\label{contrnotsubset}

\[
\psset{unit=0.8cm}
\begin{pspicture}(4,0)(8,5.5)
\pscircle(5.8,3.4){1.8}\rput(7.5,5){$b_{i}$}
\pscurve(6,4.5)(6.2,4.4)(8,1.5)(6,1.2)(5.6,2.5)(5.25,1.7)(4.4,1.3)(5.2,4.2)(6,4.5)\rput(3.3,3.85){$m$}
\pscurve{->}(3.4,4)(4.175,4.3)(4.9,4)
\rput(5.65,3.8){$H$}
\rput(4.6,1.7){$L$}\rput(7.5,1.8){$K$}
%\pscircle(5.8,3.4){1.2}
\pscircle(7,2){1.8}\rput(9,2.7){$b_j$}
\pscircle(5,1.8){1.5}\rput(4.5,.15){$b_k$}
\rput(5.6,4.85){$\emptyset$}
%\rput(7.2,2.8){$J$}
%\rput(4.7,2.4){$R$}
\end{pspicture}
\]
\end{figure}

\end{itemize}

\end{proof}

We now consider the flag building set $B|_{b_k}$. It is not necessarily true that $D|_{b_k}$ is a decomposition of $b_k$. Let $$D_k: = D|_{b_k} \cup \{b_j ~|~b_j \subseteq b_k, j \not \in V_k\}.$$ Then $D_k$ is a decomposition of $b_k$ in $B$, and for any $j$ we have that $D_k \cup \{b_i~|~i \le j ~\hbox{and}~ i \in V_k \}$ is a connected flag building set on $b_k$. We define $\Gamma(B|_{b_k})$ to be the flag complex $\Gamma(O)$ with respect to the flag ordering $O$ of $B|_{b_k}$ with decomposition $D_k$ and ordering of $B|_{b_k} - D_k$ given by $b_{v_1},b_{v_2},...,b_{u_{|V_k|}}$ where $v_j$ is the $j$th element of $V_k$ listed in increasing order. We label the vertices of $\Gamma(B|_{b_k})$ by $v(b_{v_1}),...,v(b_{u_{|V_k|}})$ rather than by their index in $V_k$. In keeping with the notation that $B_j$ is the \emph{flag} building set obtained after adding elements indexed up to $j$, we let $(B|_{b_k})_j$ denote the flag building set $D_k \cup\{b_i~|~ i \le j ~\hbox{and}~ i \in V_k\}$, so that $\Gamma((B|_{b_k})_j)$ is defined. Note then that for any $j$, $B_j|_{b_k} \subseteq (B|_{b_k})_j$.\\

\begin{prop}
Let $B$ be a connected flag building set with flag ordering given by $(D,b_1,...,b_k)$. Then $\Gamma(B|_{b_k}) = \Gamma(B)|_{V_k}$. \label{resprop}
\end{prop}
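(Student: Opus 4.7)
Both $\Gamma(B|_{b_k})$ and $\Gamma(B)|_{V_k}$ are flag complexes on the common vertex set $\{v(b_{v_a}) : 1 \le a \le |V_k|\}$, so it suffices to show that for any $i,j \in V_k$ with $i<j$ (corresponding to positions $a<b$ in the $V_k$-ordering) the edge $\{v(b_i),v(b_j)\}$ lies in both complexes or in neither. The plan is to split the argument into Case A ($b_i \subseteq b_j$) and Case B ($b_i \not\subseteq b_j$), corresponding to the defining conditions of $V_j$ and $U_j$.

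The key preliminary observation is that any potential witness $b$ for the edge condition --- namely $b_i \subsetneq b \subsetneq b_j$ in Case A, or $b\setminus b_j = b_i\setminus b_j$ in Case B --- automatically satisfies $b \subseteq b_k$. In Case A this is immediate from $b \subsetneq b_j \subseteq b_k$; in Case B it follows from the decomposition $b = (b \cap b_j) \cup (b_i \setminus b_j) \subseteq b_j \cup b_i \subseteq b_k$. Consequently the $\Gamma(B)$-edge condition can be rephrased using only witnesses in $B_{i-1}|_{b_k}$. Combined with the straightforward inclusion $B_{i-1}|_{b_k} \subseteq (B|_{b_k})_{a-1}$ (every element of $B_{i-1}|_{b_k}$ belongs to $D|_{b_k}$, to $\{b_{v_1},\ldots,b_{v_{a-1}}\}$, or to a degenerate subset of $b_k$, all of which lie in $D_k \cup \{b_{v_c} : c < a\} = (B|_{b_k})_{a-1}$), this immediately yields Case A forward and Case B backward: any $\Gamma(B)$-witness is automatically a $\Gamma(B|_{b_k})$-witness.

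For the remaining directions --- Case A backward and Case B forward, both of which amount, contrapositively, to producing a witness in $B_{i-1}|_{b_k}$ from a witness $c \in (B|_{b_k})_{a-1}$ --- I proceed as follows. If $c \in B_{i-1}|_{b_k}$ we are done; otherwise $c = b_l$ with $l \ge i$, $l \notin V_k$, and $b_l \subseteq b_k$. I would then apply Lemma \ref{decomp} in the flag building set $B_l$ to the pair $b_i \subsetneq b_l$, yielding a decomposition of $b_l$ containing $b_i$. The maximal component of this decomposition containing $b_i$ either strictly contains $b_i$ --- producing a new witness $d \in B_{l-1}$ of strictly smaller index, with which I iterate --- or equals $b_i$ itself. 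Since indices strictly decrease, the iteration either terminates in $B_{i-1}$ (giving the desired witness) or reaches a base-case configuration in which $b_i$ is a maximal component of the current element.

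The main obstacle is this base-case configuration together with the parallel analysis for Case B. I expect to handle these by a set-diagram analysis in the spirit of Case (2) of the proof of Proposition \ref{contprop}: using the hypothesis $j \in V_k$ (which supplies an intermediate $b_* \in B_{j-1}$ strictly between $b_j$ and $b_k$), the hypothesis $l \notin V_k$ (which restricts what can sit strictly between $b_l$ and $b_k$ inside $B_{l-1}$, forcing in particular $l < j$), and the building-set union axiom applied to carefully chosen intersections of $b_i$, $b_j$, $b_l$, and $b_k$, one should either construct the required element of $B_{i-1}|_{b_k}$ or derive a contradiction with $l \notin V_k$ or with $j \in V_k$.
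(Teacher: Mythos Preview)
Your handling of the two easy directions (Case A from $\Gamma(B)$ to $\Gamma(B|_{b_k})$, and Case B in the contrapositive from $\Gamma(B)$ to $\Gamma(B|_{b_k})$, both via the inclusion $B_{i-1}|_{b_k}\subseteq (B|_{b_k})_{a-1}$) is correct and is exactly what the paper does.

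The gap is in the remaining two directions. Your plan is an iterative descent through Lemma~\ref{decomp}, but you leave the base case unresolved, and in Case~B you apply the lemma to ``$b_i\subsetneq b_l$'' without justification: from $b_l\setminus b_j=b_i\setminus b_j$ alone one does not get $b_i\subseteq b_l$. More seriously, after one step of your iteration the new element $b_{l'}$ need not satisfy $l'\notin V_k$, so the hypothesis you intend to exploit is lost and the descent stalls.

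What you are missing is a one-line structural fact about the degenerate elements of $D_k$ which makes the iteration unnecessary. If $d=b_l\in D_k\setminus D|_{b_k}$ with $d\ne b_k$, then every $b\in B_{l-1}|_{b_k}$ with $b\ne b_k$ is either contained in $d$ or disjoint from $d$: otherwise $b\cup d\in B_{l-1}$ strictly contains $b_l$ and is strictly contained in $b_k$ (since $b_k\notin B_{l-1}$), contradicting $l\notin V_k$. The paper combines this with the hypothesis $i\in V_k$, which supplies $b\in B_{i-1}$ with $b_i\subsetneq b\subsetneq b_k$. Since $b$ meets $d$ (through $b_i$ in Case~A, through $b_i\setminus b_j\ne\emptyset$ in Case~B) and $b\ne b_k$, one obtains $b\subseteq d$ in a single step. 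In Case~A this gives $b_i\subsetneq b\subseteq d\subsetneq b_j$, the desired $B_{i-1}$-witness. In Case~B it gives $b\subseteq d\subseteq b_i\cup b_j$, hence $b\setminus b_j=b_i\setminus b_j$, contradicting the assumed edge in $\Gamma(B)|_{V_k}$. Incidentally, the same property applied with $b=b_i$ is exactly what would justify your unproved inclusion $b_i\subseteq b_l$ in Case~B.
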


\begin{proof}
Both $\Gamma(B|_{b_k})$ and $\Gamma(B)|_{V_k}$ are both flag complexes with the vertex set $v(b_{v_1}),v(b_{v_2}),...,v(b_{u_{|V_k|}})$. We need to show that for any $i, j \in V_k$ where $i<j$, $\{v(b_{i}),v(b_j)\} \in \Gamma(B)|_{V_k}$ if and only if $\{v(b_{i}),v(b_j)\} \in \Gamma(B|_{b_k}).$\\

$\Rightarrow:$ Suppose that $\{v(b_{i}),v(b_j)\} \in \Gamma(B)|_{V_k}$. First assume that $b_{i} \subseteq b_{j}$. Then there is some $b \in B_{i-1}$ such that $b_i \subsetneq b \subsetneq b_j$. Since $b \in B_{i-1}|_{b_k}$ and $B_{i-1}|_{b_k} \subseteq (B|_{b_k})_{i-1}$ this implies that $\{v(b_i),v(b_j)\} \in \Gamma(B|_{b_k})$.\\

Now suppose that $b_i \not \subseteq b_j$. Suppose for a contradiction that $\{v(b_i),v(b_j)\} \not \in \Gamma(B|_{b_k})$. Then there exists some $d \in D_k-D|_{b_k}$, $d \not \in B_{i-1}$, such that $d \cup b_j = b_i \cup b_j$. Since $i \in V_k$ there exists some $b \in B_{i-1}$ such that $b_i \subsetneq b \subsetneq b_k$. Since $\{v(b_i),v(b_j)\} \in \Gamma(B)|_{V_k}$ we have that $b \backslash ( b_i \cup b_j) \ne \emptyset$. Since the index of $d$ is not in $V_k$, every element in the restriction to $b_k$ that is earlier than $d$ in the flag ordering is a subset of it or does not intersect it. This implies $b \subseteq d$, so $d \backslash (b_i \cup b_j) \ne \emptyset$, which contradicts $d \cup b_j \ne b_i \cup b_j$.\\

$\Leftarrow:$ Suppose that $\{v(b_{i}),v(b_j)\} \in \Gamma(B|_{b_k})$. First assume that $b_{i} \subseteq b_{j}$, so that there is some $d \in (B|_{b_k})_{i-1}$ such that $b_i \subsetneq d \subsetneq b_j$. If $d \in B_{i-1}|_{b_k}$ then clearly $\{v(b_i),v(b_j)\} \in \Gamma(B)|_{V_k}$ as desired. If $d \not \in B_{i-1}|_{b_k}$ then $d \in D_k - D|_{b_k}$. Since $i \in V_k$ there exists some $b \in B_{i-1}$ such that $b_i \subsetneq b \subsetneq b_k$. Since the index of $d$ is not in $V_k$ we have that $b_i \subsetneq b \subsetneq d$. This is because $d$ either contains or does not intersect elements that are earlier in the flag ordering and contained in $b_k$. Then since $d \subsetneq b_j$ this implies $b \subsetneq b_j$ and since $b \in B_{i-1}$ and $b_i \subsetneq b \subsetneq b_j$ this implies $\{v(b_i),v(b_j)\} \in \Gamma(B)|_{V_k}$.\\

Now assume that $b_i \not \subseteq b_j$. Suppose for a contradiction that $\{v(b_i),v(b_j)\} \not \in \Gamma(B)|_{V_k}$. Then there exists $b \in B_{i-1}|_{b_k}$ such that $b \cup b_j = b_i \cup b_j$. Since $B_{i-1}|_{b_k} \subseteq (B|_{b_k})_{i-1}$ this contradicts $\{v(b_{i}),v(b_j)\} \in \Gamma(B|_{b_k})$.

\end{proof}

\begin{thm}
Let $B$ be a connected flag building set with flag ordering $O$. Then $\gamma(B) = f(\Gamma(O))$.\label{fvectthm}
\end{thm}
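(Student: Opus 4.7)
The approach is induction on $k = |B - D|$, the number of elements in the flag ordering outside the initial decomposition. The base case $k = 0$ is immediate: $B = D$ is a minimal flag building set, so $\gamma(D) = 1$, and $\Gamma(O)$ has no vertices, giving $f(\Gamma(O)) = 1$.

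For the inductive step, Volodin's recurrence (Theorem~\ref{vollem}) gives
$$\gamma(B) = \gamma(B_{k-1}) + t\gamma(B|_{b_k})\gamma(B/b_k).$$
Each of the building sets $B_{k-1}$, $B|_{b_k}$, $B/b_k$ carries a natural flag ordering with strictly fewer than $k$ added elements (respectively $k-1$, $|V_k|$, and $|U_k|$), so by induction their $\gamma$-polynomials equal $f(\Gamma(B_{k-1}))$, $f(\Gamma(B|_{b_k}))$, $f(\Gamma(B/b_k))$, and Propositions~\ref{contprop} and~\ref{resprop} rewrite the latter two as $f(\Gamma(B)|_{V_k})$ and $f(\Gamma(B)|_{U_k})$. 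On the other hand, the standard star decomposition of a flag complex at a vertex gives
$$f(\Gamma(B)) = f(\Gamma(B_{k-1})) + t \cdot f(\Gamma(B)|_{U_k \cup V_k}),$$
using that $\Gamma(B) - v(b_k) = \Gamma(B_{k-1})$ (the sets $U_j, V_j$ for $j < k$ depend only on $B_{j-1}$) and that the link of $v(b_k)$ in a flag complex is the induced subcomplex on its neighbors, which by definition are indexed by $U_k \cup V_k$.

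Comparing the two expressions reduces the theorem to the claim that $\Gamma(B)|_{U_k \cup V_k}$ equals the join $\Gamma(B)|_{U_k} * \Gamma(B)|_{V_k}$, equivalently that every $i \in U_k$ is adjacent in $\Gamma(B)$ to every $j \in V_k$. I would verify this by splitting on the relative order of $i$ and $j$ and the inclusion between $b_i$ and $b_j$. When $i < j$, a putative witness $b \in B_{i-1}$ for $i \notin U_j$, namely $b \backslash b_j = b_i \backslash b_j$, automatically satisfies $b \backslash b_k = b_i \backslash b_k$ because $b_j \subseteq b_k$, contradicting $i \in U_k$. When $j < i$ and $b_j \not\subseteq b_i$, a putative witness for $j \notin U_i$ combines with $b_j$ and $b_i$ using closure of $B_{i-1}$ under unions of intersecting sets to yield an element of $B_{i-1}$ agreeing with $b_i$ outside $b_k$, again contradicting $i \in U_k$.

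The main obstacle is the remaining case $j < i$ with $b_j \subsetneq b_i$, where one must produce some $b \in B_{j-1}$ with $b_j \subsetneq b \subsetneq b_i$. Here I would pick $\hat b \in B_{j-1}$ minimal (for inclusion) with $b_j \subsetneq \hat b \subseteq b_k$; such $\hat b$ exists by $j \in V_k$. Applying Lemma~\ref{decomp} inside the flag building set $B_j$ to the pair $b_j \subsetneq \hat b$, minimality of $\hat b$ forces $b_j$ itself to be a maximal component of the resulting decomposition, so $\hat b = b_j \sqcup d$ with $d \in B_{j-1}$ and $d \subseteq b_k$. If $\hat b \not\subseteq b_i$ then $d \not\subseteq b_i$, and since $b_i \cap \hat b \supseteq b_j \ne \emptyset$ the union $b_i \cup \hat b$ is a proper superset of $b_i$ belonging to $B_{i-1}$; moreover $(b_i \cup \hat b) \backslash b_k = b_i \backslash b_k$ because $\hat b \subseteq b_k$, once again contradicting $i \in U_k$. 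Hence $\hat b \subsetneq b_i$, providing the needed witness for $j \in V_i$.
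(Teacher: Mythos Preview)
Your proof is correct and follows essentially the same approach as the paper: induction on $k$, the link decomposition at $v(b_k)$, Propositions~\ref{contprop} and~\ref{resprop}, and then verifying that every $i\in U_k$ is adjacent to every $j\in V_k$ so that the link is a join. The only notable difference is in the subcase $j<i$, $b_j\subsetneq b_i$: the paper simply takes any $b\in B_{j-1}$ with $b_j\subsetneq b\subsetneq b_k$ (which exists since $j\in V_k$) and observes that $b\not\subseteq b_i$ would force $b\cup b_i\in B_{i-1}$ with $(b\cup b_i)\backslash b_k=b_i\backslash b_k$, contradicting $i\in U_k$; hence $b_j\subsetneq b\subsetneq b_i$. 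Your detour through a minimal $\hat b$ and Lemma~\ref{decomp} reaches the same conclusion but is unnecessary---the decomposition $\hat b=b_j\sqcup d$ is never actually used, since you immediately revert to working with $b_i\cup\hat b$.
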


\begin{proof}
This is a proof by induction on the number of elements of $B-D$. The result holds for $k=0$ since $f(\Gamma(D)) = 1 = \gamma(D)$. So we assume $k \ge 1$ and that the result holds for all connected flag building sets with a smaller value of $k$.\\

By Propositions \ref{contprop} and \ref{resprop} and the inductive hypothesis we have $f(\Gamma(B)|_{U_k}) = f(\Gamma(B/{b_k}))= \gamma(B/b_k)$, and $f(\Gamma(B)|_{V_k}) = f(\Gamma(B|_{b_k}))= \gamma(B|_{b_k})$.\\

Suppose that $u \in U_k$ and $w \in V_k$. Then $\{v(b_u),v(b_w)\} \in \Gamma(B)$, for suppose for a contradiction that $\{v(b_u),v(b_w)\} \not \in \Gamma(B)$. Suppose that $u <w$. Then there is some element $b \in B_{u-1}$ such that $b \cup b_w = b_u \cup b_w$. This implies that $b \cup b_k = b_u \cup b_k$ which contradicts $ u \in U_k$. Suppose that $w < u$. Then either $b_u \cap b_w = \emptyset$ or $b_w \subseteq b_u$ (otherwise $b_u \cup b_w$ makes $b_u$ degenerate with respect to $b_k$). Suppose that $b_w \cap b_u = \emptyset$. Then since $\{v(b_u),v(b_w)\} \not \in \Gamma(B)$, there exists $b \in B_{w-1}$ such that $b \cup b_u = b_w \cup b_u$, and $b \cap b_u \ne \emptyset$. Then $b \cup b_u$ makes $b_u$ degenerate with respect to $b_k$, a contradiction. Suppose that $b_w \subseteq b_u$. Now $w \in V_k$ implies there is some $b \in B_{w-1}$ such that $b_w \subsetneq b \subsetneq b_k$. Also, $b \subseteq b_u$ else $b \cup b_u$ makes $b_u$ degenerate with respect to $b_k$. However, this implies the contradiction that $\{v(b_u),v(b_w)\} \in \Gamma(B)$ since $b_w \subsetneq b \subsetneq b_u$.\\

Hence $$\Gamma(B)|_{U_k \cup V_k} = \Gamma(B)|_{U_k}*\Gamma(B)|_{V_k},$$ and therefore $$f(\Gamma(B)|_{U_k \cup V_k}) = f(\Gamma(B)|_{U_k})f(\Gamma(B)|_{V_k})=\gamma(B/b_k)\gamma(B|_{b_k}).$$ Since the vertex $v(b_k)$ is adjacent to the vertices indexed by elements in $U_k \cup V_k$ we have
$$f(\Gamma(B)) = f(\Gamma(B_{k-1})) + t\gamma(B/b_k)\gamma(B|_{b_k}).$$ By the induction hypothesis this implies that $$f(\Gamma(B)) =\gamma(B_{k-1}) + t\gamma(B|{b_k})\gamma(B/b_k),$$ which implies that $f(\Gamma(B)) = \gamma(B)$ by Theorem \ref{vollem}.
\end{proof}

For two flag orderings $O_1$, $O_2$ of a connected flag building set $B$, it is not necessarily true that the flag complexes $\Gamma(O_1)$, $\Gamma(O_2)$ are equivalent (up to change of labels on the vertices) even if they have the same decomposition. The following example provides a counterexample.

\begin{example}
Let $B = B(Cyc_5)$, and let $$D =\{\{1\},\{2\},\{3\},\{4\},\{5\},[2],[3],[4],[5]\}.$$ Let $O_1$ be the flag ordering with decomposition $D$ and the following ordering of $B-D$:

$$\{2,3\},\{2,3,4\},\{2,3,4,5\},\{4,5\},\{3,4,5\},\{3,4\},$$ $$\{3,4,5,1\},\{4,5,1,2\},\{5,1,2,3\},\{4,5,1\},\{5,1,2\},\{1,5\}.$$ Let $O_2$ be the flag ordering with decomposition $D$ and the following ordering of $B-D$:

$$\{2,3\},\{2,3,4\},\{2,3,4,5\},\{3,4\},\{3,4,5\},\{4,5\},\{3,4,5\},\{3,4\},$$ $$\{3,4,5,1\},\{4,5,1,2\},\{5,1,2,3\},\{4,5,1\},\{5,1,2\},\{1,5\}.$$

Then $\Gamma(O_1)$ and $\Gamma(O_2)$ are depicted in Figure \ref{cycfive}.

\begin{figure}[H]
\caption{$\Gamma(O_1)$ is on the left, and $\Gamma(O_2)$ is on the right.}
\label{cycfive}

\[
\psset{unit=0.8cm}
\begin{pspicture}(-.7,-3)(3,3)
%\pscircle(0,0){2.5}
\qdisk(0,2.5){2.25pt}\qdisk(1.25,2.165){2.25pt}\qdisk(2.165,1.25){2.25pt}\qdisk(2.5,0){2.25pt}
\qdisk(2.165,-1.25){2.25pt}\qdisk(1.25,-2.165){2.25pt}\qdisk(0,-2.5){2.25pt}\qdisk(-1.25,-2.165){2.25pt}
\qdisk(-2.165,-1.25){2.25pt}\qdisk(-2.5,0){2.25pt}\qdisk(-2.165,1.25){2.25pt}\qdisk(-1.25,2.165){2.25pt}
\rput(0,2.8){$v(b_1)$}\rput(1.6,2.5){$v(b_2)$}\rput(2.7,1.5){$v(b_3)$}\rput(3.2,0){$v(b_4)$}
\rput(2.7,-1.5){$v(b_5)$}\rput(1.4,-2.6){$v(b_6)$}\rput(0,-2.9){$v(b_7)$}\rput(-1.4,-2.55){$v(b_8)$}
\rput(-2.6,-1.6){$v(b_9)$}\rput(-3.2,0){$v(b_{10})$}\rput(-2.8,1.5){$v(b_{11})$}\rput(-1.6,2.5){$v(b_{12})$}
\psline(2.5,0)(0,2.5)(-2.165,-1.25)\psline(2.165,1.25)(1.25,-2.165)(0,-2.5)\psline(2.165,-1.25)(-1.25,2.165)(-1.25,-2.165)
\end{pspicture}
\begin{pspicture}(-4.8,-3)(0,3)
%\pscircle(0,0){2.5}
\qdisk(0,2.5){2.25pt}\qdisk(1.25,2.165){2.25pt}\qdisk(2.165,1.25){2.25pt}\qdisk(2.5,0){2.25pt}
\qdisk(2.165,-1.25){2.25pt}\qdisk(1.25,-2.165){2.25pt}\qdisk(0,-2.5){2.25pt}\qdisk(-1.25,-2.165){2.25pt}
\qdisk(-2.165,-1.25){2.25pt}\qdisk(-2.5,0){2.25pt}\qdisk(-2.165,1.25){2.25pt}\qdisk(-1.25,2.165){2.25pt}
\rput(0,2.8){$v(b_1)$}\rput(1.6,2.5){$v(b_2)$}\rput(2.7,1.5){$v(b_3)$}\rput(3.2,0){$v(b_4)$}
\rput(2.7,-1.5){$v(b_5)$}\rput(1.4,-2.6){$v(b_6)$}\rput(0,-2.9){$v(b_7)$}\rput(-1.4,-2.55){$v(b_8)$}
\rput(-2.6,-1.6){$v(b_9)$}\rput(-3.2,0){$v(b_{10})$}\rput(-2.8,1.5){$v(b_{11})$}\rput(-1.6,2.5){$v(b_{12})$}
\psline(-2.165,-1.25)(0,2.5)(1.25,-2.165)(0,-2.5)\psline(-1.25,-2.165)(-1.25,2.165)(2.5,0)(2.165,1.25)
\end{pspicture}
\]
\end{figure}
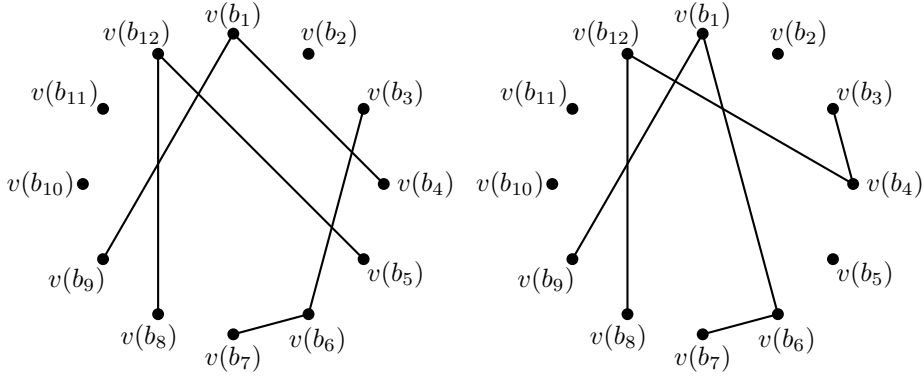

\end{example}

\end{section}

\begin{section}{The flag complexes of Nevo and Petersen}

In this section we compare the flag complexes that we have defined to those defined for certain graph-associahedra by Nevo and Petersen \cite{np}. They define flag complexes $\Gamma(\widehat{\mathfrak{S}}_n)$, $\Gamma(\widehat{\mathfrak{S}}_n(312))$ and $\Gamma(P_n)$ such that

\begin{itemize}
\item $\gamma(B(K_n)) = f(\Gamma(\widehat{\mathfrak{S}}_n)),$
\item $\gamma(B(Path_n)) = f(\Gamma(\widehat{\mathfrak{S}}_n(312))),$
\item $\gamma(B(Cyc_n)) = f(\Gamma(P_n)).$
\end{itemize}
We show that there is a flag ordering for $B(Path_n)$ so that $$\Gamma(B(Path_n)) \cong \Gamma(\widehat{\mathfrak{S}}_n(312)),$$ and that the analogous statement is not true for $B(K_n)$ and $B(Cyc_n)$.  \\

\begin{subsection}{The flag complexes $\Gamma(B(K_n))$ and $\Gamma(\widehat{\mathfrak{S}}_n)$}

The permutohedron is the nestohedron $P_{B(K_n)}$. Note that $B(K_n)$ consists of all nonempty subsets of $[n]$. The $\gamma$-polynomial of $P_{B(K_n)}$ is the descent generating function of $\widehat{\mathfrak{S}}_n,$ which denotes the set of permutations with no double descents or final descent (see \cite[Theorem 11.1]{prw}). First we recall the definition of $\Gamma(\widehat{\mathfrak{S}}_n)$ given by Nevo and Petersen \cite[Section 4.1]{np}.\\

A \emph{peak} of a permutation $w=  w_1....w_n$ in $\mathfrak{S}_n$ is a position $i \in [1,n-1]$ such that $w_{i-1} < w_i >w_{i+1}$, (where $w_0 :=0$). We denote a peak at position $i$ with a bar $w_1..w_i|w_{i+1}...w_n$. A \emph{descent} of a permutation $w= w_1...w_n$ is a position $i \in [n-1]$ such that $w_{i+1}< w_i$. Let $\widehat{\mathfrak{S}}_n$ denote the set of permutations in $\mathfrak{S}_n$ with no double (i.e. consecutive) descents or final descent, and let $\widetilde{\mathfrak{S}}_n$ denote the set of permutations in $\mathfrak{S}_n$ with one peak. Then $\widehat{\mathfrak{S}}_n \cap \widetilde{\mathfrak{S}}_n$ consists of all permutations of the form

$$w_1...w_i|w_{i+1}...w_n$$ where $~1 \le i \le n-2,~$ $~w_1< \cdots < w_i,~$ $~w_i > w_{i+1},~$ $~w_{i+1}< \cdots <w_n.$\\

Define the flag complex $\Gamma(\widehat{\mathfrak{S}}_n)$ on the vertex set $\widehat{\mathfrak{S}}_n \cap \widetilde{\mathfrak{S}}_n$ where two vertices

$$u =u_1|u_2$$ and
$$v = v_1|v_2$$ with $|u_1| < |v_1|$ are adjacent if there is a permutation $w \in \mathfrak{S}_n$ of the form

$$w = u_1|a|v_2.$$ Equivalently, if $v_2 \subseteq u_2$, $|u_2 - v_2| \ge 2,$ $min(u_2- v_2)< max(u_1)$ and $max(u_2 - v_2) >min(v_2)$. (Since there must be two peaks in $w$ this implies $|a| \ge 2$). The faces of $\Gamma(\widehat{\mathfrak{S}}_n)$ are the cliques in this graph.\\

\begin{example}
Taking only the part after the peak, $\widehat{\mathfrak{S}}_5 \cap \widetilde{\mathfrak{S}}_5$ can be identified with the set of subsets of $[5]$ of sizes 2,3 and 4 which are not $\{4,5\},\{3,4,5\}$, or $\{2,3,4,5\}$. Then the edges of $\Gamma(\widehat{\mathfrak{S}}_5)$ are given by:\\
$\{1,2,3,4\}$ is adjacent to each of $\{1,2\},\{1,3\},\{1,4\},\{2,3\},\{2,4\}$,\\
$\{1,2,3,5\}$ is adjacent to each of $\{1,2\},\{1,3\},\{1,5\},\{2,3\},\{2,5\}$,\\
$\{1,2,4,5\}$ is adjacent to each of $\{1,4\},\{1,5\},\{2,4\},\{2,5\}$, and \\
$\{1,3,4,5\}$ is adjacent to each of $\{3,4\},\{3,5\}$.
\end{example}

\begin{prop}
There is no flag ordering of $B(K_5)$ so that
$$\Gamma(B(K_5)) \cong \Gamma(\widehat{\mathfrak{S}}_5).$$
\end{prop}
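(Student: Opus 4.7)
The plan is to identify a graph invariant of $\Gamma(\widehat{\mathfrak{S}}_5)$ not achieved by $\Gamma(B(K_5))$ under any flag ordering. First I would make $\Gamma(\widehat{\mathfrak{S}}_5)$ explicit: the adjacency condition $|u_2-v_2|\ge 2$ together with $|u_1|<|v_1|$ (equivalently $|v_2|<|u_2|$) forces $|u_2|-|v_2|\ge 2$. Since the vertex set $\widehat{\mathfrak{S}}_5\cap\widetilde{\mathfrak{S}}_5$ is indexed by subsets of sizes $2$, $3$, $4$ of $[5]$ (in the role of $u_2$), every edge of $\Gamma(\widehat{\mathfrak{S}}_5)$ must join a vertex with $|u_2|=4$ to a vertex with $|u_2|=2$. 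Hence the nine vertices with $|u_2|=3$ are isolated, and from the edge list in the preceding Example the complex splits into exactly eleven connected components: nine isolated vertices, a copy of the path $P_3$ on $\{3,4\}-\{1,3,4,5\}-\{3,5\}$, and a ten-vertex bipartite component with fourteen edges.

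The main step is then to show that no flag ordering of $B(K_5)$ yields a graph with nine isolated vertices. Unwinding the definitions, a vertex $v(b_k)$ is isolated in $\Gamma(B(K_5))$ exactly when $U_k=V_k=\emptyset$ and $k\notin U_j\cup V_j$ for every $j>k$. For each $b_j\supsetneq b_k$ added later this forbids any element strictly between $b_k$ and $b_j$ from lying in $B_{k-1}$, and for each $b_j$ with $b_k\not\subseteq b_j$ added later some element of $B_{k-1}$ must match $b_k$ modulo $b_j$. I expect the richness of $B(K_5)$ (the full power set of $[5]$ minus the empty set) to make these two families of demands mutually incompatible for too many choices of $b_k$: whenever $b_k\subsetneq b_j$ with $|b_j|-|b_k|\ge 2$, there are several intermediate elements of $B(K_5)$ that would all need to be ordered after $b_k$, conflicting with the corresponding constraints imposed by other candidate isolated vertices.

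The case analysis is organized by the decomposition $D$ (a binary tree on five leaves, of which there are only a few isomorphism types) and uses Propositions \ref{contprop} and \ref{resprop} to reduce isolation in $\Gamma(B(K_5))$ to corresponding statements in the smaller complexes $\Gamma(B(K_5)/b_k)$ and $\Gamma(B(K_5)|_{b_k})$, enabling induction. The main obstacle is producing the bookkeeping in a clean and uniform way over all flag orderings for each $D$. If the isolated-vertex bound proves too delicate, an attractive fallback invariant is bipartiteness: since $\Gamma(\widehat{\mathfrak{S}}_5)$ is bipartite, exhibiting an odd cycle (necessarily of length at least five, since the complex is triangle-free) in every $\Gamma(B(K_5))$ would equally establish the non-isomorphism.
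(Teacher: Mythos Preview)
Your proposal identifies a different graph invariant from the one the paper uses. The paper's argument is entirely local: it observes that $\Gamma(\widehat{\mathfrak{S}}_5)$ contains a vertex of degree $5$ (e.g.\ the one indexed by $\{1,2,3,4\}$), and then shows by a case analysis on $|b_j|\in\{2,3,4\}$ that no vertex $v(b_j)$ in $\Gamma(B(K_5))$ can have degree $5$ under any flag ordering. This buys a short, self-contained argument, since bounding the degree of a single vertex only requires examining the neighbours of that vertex and the few possible shapes of $D$.

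Your primary invariant (nine isolated vertices) is correctly computed for $\Gamma(\widehat{\mathfrak{S}}_5)$, and it may well distinguish the two complexes, but you have not actually carried out the verification: the proposal stops at the point of saying the bookkeeping is the ``main obstacle.'' Controlling isolated vertices is a global constraint---you must simultaneously track, for each candidate $b_k$, both the conditions $U_k=V_k=\emptyset$ and the conditions $k\notin U_j\cup V_j$ for all later $j$---and it is not clear this is any easier than the paper's degree bound. Invoking Propositions~\ref{contprop} and~\ref{resprop} for an inductive reduction is also more machinery than the situation warrants for a fixed $n=5$.

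More seriously, your fallback invariant is wrong. Bipartiteness cannot work: the explicit flag ordering of $B(K_5)$ given immediately after the proposition in the paper produces a $\Gamma(B(K_5))$ that is a disjoint union of a $6$-cycle, a $4$-cycle, a $6$-cycle, and six isolated vertices. This graph is bipartite, so no odd cycle exists to exhibit, and the fallback argument collapses for that ordering.
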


\begin{proof}
Suppose for a contradiction that there is some flag ordering of $B(K_5)$ with decomposition $D$ such that $\Gamma(B(K_5)) \cong \Gamma(\widehat{\mathfrak{S}}_5)$. Then there is some vertex $v(b_j) \in \Gamma(B(K_5))$ of degree 5. We consider the following three cases:

\begin{enumerate}
\item $|b_j|=2$,
\item $|b_j| =3$,
\item $|b_j|=4$.
\end{enumerate}
Note that $D$ can only be one of the following three building sets (up to the order reversing permutation of $B$):

$$\{\{1\},...,\{5\},[2],[3],[4],[5]\},$$ $$\{\{1\},...,\{5\},\{1,2\},\{3,4\},[4],[5]\},$$ $$\{\{1\},...,\{5\},[2],[3],\{4,5\},[5]\}.$$

\begin{enumerate}
\item[(1)] Suppose that $|b_j| =2$. Then $V_j = \emptyset$ and $|U_j| \le 2$ (using the fact that $D/b_j$  includes at least one 2-element subset). So there are $\ge 3$ $b_k$'s with $k>j$ and $j \in U_k \cup V_k$ (i.e. $v(b_j)$ is adjacent to $v(b_k)$). Such $b_k$'s must be two element sets not intersecting $b_j$ or four element sets that contain $b_j$. Without loss of generality (WLOG for short), let $b_j = \{4,5\}$.
    \begin{enumerate}
    \item[(1a)] Suppose that no three element set containing $b_j$ occurs earlier than $b_j$. Then the case of 4-element $b_k$'s cannot occur, so $\{1,2\},\{1,3\},\{2,3\}$ are the $b_k$'s. Since there is a 2-element set in $D$, we have (WLOG) $\{3,5\} \in D$, implying that $\{3,4,5\}$ is earlier than $\{4,5\}$, a contradiction.
    \item[(1b)]Suppose that exactly one 3-element set containing $b_j$, WLOG $\{3,4,5\}$, occurs earlier than $b_j$. Then $\{1,3\},\{2,3\},\{1,2,4,5\}$ can't occur among the $b_k$'s, so $\{1,2\},\{1,3,4,5\},\{2,3,4,5\}$ are the $b_k$'s. Hence $|U_j| =2$, so $B_{j-1}/ b_j$ consists of all non-empty subsets of $\{1,2,3\}$. So $\exists b \in B_{j-1}$ such that $b \backslash b_j = \{2,3\}$. But then $b \in B_{j-1}$, $\{3,4,5\} \in B_{j-1}$ implies $b \cup \{2,3,4\} = \{2,3,4,5\} \in B_{j-1}$, a contradiction.
    \item[(1c)] Suppose that there are at least two 3-element sets containing $b_j$, WLOG $\{2,4,5\}$ and $\{3,4,5\}$, that occur earlier than $b_j$. Then $\{2,3,4,5\}$ occurs earlier than $b_j$ and $\{1,2\},\{1,3\},\{2,3\}$ can't occur among the $b_k$'s, so we have a contradiction.
    \end{enumerate}
    ~\\
\item[(2)] Suppose that $|b_j| = 3.$ It is easy to see that $v(b_j)$ is not adjacent to any vertices $v(b_i)$ where $i<j$, i.e. $U_j =V_j = \emptyset$. Hence there must be 5 elements $b_k$, $k>j$, such that $j \in V_k \cup U_k$, and these elements must be of size 2. Suppose WLOG, that $b_j = \{1,2,3\}$, and that the five elements $b_k$ are $$\{1,4\},\{2,4\},\{3,4\},\{1,5\},\{2,5\}.$$ There is one two element subset of $b_j$ that is earlier than $b_j$ in the flag ordering since $b_j$ requires a decomposition, and this element must have the same image in the contraction by one of $\{1,4\},\{2,4\},\{3,4\},\{1,5\},\{2,5\}$ as $\{1,2,3\}$, hence this case cannot occur.\\

\item[(3)] Suppose that $|b_j| =4$. Note that $U_j = \emptyset$.
\begin{itemize}
\item[(3a)] Suppose that no three element subset of $b_j$ occurs earlier than $b_j$. Then $V_j = \emptyset$, so there are at least five $b_k$ $k>j$ such that $j \in U_k \cup V_k$. These $b_k$'s are clearly 2-element subsets of $b_j$, but for $b_j$ to have a decomposition in $B_j$, two of the 2-element subsets of $b_j$ must occur earlier than $b_j$, a contradiction.
\item[(3b)] Suppose WLOG that $b_j = \{1,2,3,4\}$ and that $\{1,2,3\}$ occurs earlier than $b_j$. Since $B_{j-1}$ is a building set no other 3-element subset of $b_j$ occurs before $b_j$. If $v(b_j)$ is adjacent to $v(b_k)$ then either $k<j$ which forces $b_k$ to be a 2-element subset of $\{1,2,3\}$, or $k>j$ which also forces $b_k$ to be a two element subset of $\{1,2,3\}$ (so that $\{1,2,3\} \backslash b_k \ne \{1,2,3,4\} \backslash b_k$ and $\{1,2,3,4,5\} \backslash b_k \ne \{1,2,3,4\} \backslash b_k$). So $v(b_j)$ is adjacent to at most three vertices, a contradiction.
\end{itemize}
\end{enumerate}

Since we have shown that none of the cases (1), (2) or (3) can occur we have a contradiction, as desired.
\end{proof}

We will now give a combinatorial description of $\Gamma(B(K_n))$ for a particular flag ordering. Let $O$ be the flag ordering of $B=B(K_n)$ with decomposition

$$D=\{\{1\},\{2\},...,\{n\},[2],[3],...,[n]\}$$ where elements $a,b \in B-D$ are ordered so that $a$ is earlier than $b$ if:

\begin{itemize}
\item $\max(a) < \max(b),$ or
\item $\max(a) = \max(b)$ and $|a|>|b|,$ or
\item $\max(a) = \max(b)$, $|a| = |b|$ and $\min(a \nabla b) \in a$
\end{itemize}
where $\nabla$ denotes the symmetric difference between two sets.\\

Then in $\Gamma(O)$, vertices corresponding to elements $a,b \in B-D$ are adjacent if either:
\begin{itemize}
\item $a \subseteq b$ and $\min(b-a) < \max(a),$
\item $\max(a) \not \in b$ and $|a \backslash b| \ge 2$ and $\min(b \backslash a) >\max(a)$.
\end{itemize}

\begin{example}
The edges of $\Gamma(B(K_5))$ are between the consecutive vertices in the following three sequences, which form cycles:

$$v(\{1,4\}),v(\{1,2,4,5\}),v(\{2,4\}),v(\{2,3,4,5\}),v(\{3,4\}),v(\{1,3,4,5\}),v(\{1,4\})$$ and
$$v(\{1,3\}),v(\{1,2,3,5\}),v(\{2,3\}),v(\{4,5\}),v(\{1,3\})$$ and
$$v(\{1,2,4\}),v(\{1,5\}),v(\{1,3,4\}),v(\{3,5\}),v(\{2,3,4\}),v(\{2,5\}),v(\{1,2,4\}).$$
\end{example}

\end{subsection}

\begin{subsection}{The flag complexes $\Gamma(B(Path_n))$ and $\Gamma(\widehat{\mathfrak{S}}_n(312))$}

The associahedron is the nestohedron $P_{B(Path_n)}.$ Note that $B(Path_n)$ consists of all intervals $[j,k]$ with $1 \le j \le k \le n$. The $\gamma$-polynomial of the associahedron is the descent generating function of $\widehat{\mathfrak{S}}_n(312)$, which denotes the set of 312-avoiding permutations with no double or final descents (see \cite[Section 10.2]{prw}). We now describe the flag complex $\Gamma(\widehat{\mathfrak{S}}_n(312))$ defined by Nevo and Petersen \cite[Section 4.2]{np}.\\

Given distinct integers $a,b,c,d$ such that $a<b$ and $c<d$, the pairs $(a,b),(c,d)$ are \emph{non-crossing} if either

\begin{itemize}
\item $a<c<d<b$ (or $c<a<b<d$), or
\item $a<b<c<d$ (or $c<d<a<b).$
\end{itemize}

Define $\Gamma(\widehat{\mathfrak{S}}_n(312))$ to be the flag complex on the vertex set
$$V_{n}: = \{(a,b)~|~1 \le a<b \le n-1\},$$ with faces the sets $S$ of $V_n$ such that if $(a,b) \in S$ and $(c,d) \in S$ then $(a,b)$ and $(c,d)$ are non-crossing.\\

Let $O$ denote the flag ordering of $B =B(Path_n)$ with decomposition
$D=\{\{1\},\{2\},\{3\},\{4\},\{5\},[2],[3],[4],[5]\},$ where elements $a,b \in B-D$ are ordered so that $a$ is earlier than $b$ if:

\begin{itemize}
\item $\max(a) < \max(b)$, or
\item $\max(a) = \max(b)$ and $|a| > |b|$.
\end{itemize}

\begin{prop}
For the flag ordering $O$ of $B = B(Path_n)$ described above, $\Gamma(O) \cong \Gamma(\widehat{\mathfrak{S}}_n(312))$ where the bijection on the vertices is given by $v([a+1,b+1]) \mapsto (a,b)$.
\end{prop}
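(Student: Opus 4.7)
My plan is to reduce the isomorphism to a statement about 1-skeletons, since both $\Gamma(O)$ and $\Gamma(\widehat{\mathfrak{S}}_n(312))$ are flag complexes. Under the stated bijection $v([a+1,b+1]) \mapsto (a,b)$, the vertex set $B - D$ is identified with $\{[j,k] : 2 \le j < k \le n\}$, because all singletons and the intervals $[1,k]$ belong to $D$. The ordering on $B - D$ simplifies to: $[j,k]$ precedes $[j',k']$ iff $k < k'$, or $k = k'$ and $j < j'$.

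For two intervals $b_i = [p+1,q+1]$ and $b_j = [r+1,s+1]$ with $i < j$, this ordering (combined with the structure of intervals) forces exactly one of four mutual positions:
\begin{enumerate}
\item $b_i \subsetneq b_j$, giving $r \le p$ and $q < s$;
\item $b_j \subsetneq b_i$, giving $p < r$ and $q = s$;
\item $b_i$ and $b_j$ are disjoint, giving $q < r$;
\item $b_i$ and $b_j$ properly overlap, giving $p < r \le q < s$.
\end{enumerate}
Comparing against the Nevo--Petersen non-crossing definition on the pairs $(p,q)$ and $(r,s)$, non-crossing holds precisely in case (3) and in case (1) when the inequality $r \le p$ is strict; it fails in cases (2), (4), and in the subcase $r = p$ of case (1).

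To check that adjacency in $\Gamma(O)$ matches this, I would compute $U_j$ and $V_j$ case by case. In case (1), only $V_j$ can contribute, and an intermediate $b \in B_{i-1}$ with $b_i \subsetneq b \subsetneq b_j$ must take the form $[u+1,q+1]$ with $r \le u < p$; such a $b$ exists iff $r < p$, matching strict nesting. In cases (2) and (4), the interval $b := [p+1, r]$ lies in $B_{i-1}$ (either as a singleton in $D$ when $p+1 = r$, or as an element of $B - D$ with $\max(b) = r < q+1 = \max(b_i)$), and satisfies $b \backslash b_j = [p+1, r] = b_i \backslash b_j$; hence $i \notin U_j$ and the vertices are non-adjacent. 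In case (3), a direct argument using the interval structure rules out any candidate $b \in B_{i-1}$ with $b \backslash b_j = b_i$ (any $b$ intersecting $b_j$ would need to have $\max(b) \ge r$, while any $b$ disjoint from $b_j$ and equal to $b_i$ after restriction would coincide with $b_i$); so $i \in U_j$ and the vertices are adjacent.

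The main obstacle will be the bookkeeping in the case analysis: in each non-adjacency case one must verify that the specified witness $b$ truly precedes $b_i$ in the flag ordering (distinguishing among singletons in $D$, non-singleton elements of $D$, and elements of $B - D$), and in each adjacency case one must exhaustively rule out all alternative witnesses. Once the four cases are settled, adjacency in $\Gamma(O)$ coincides exactly with the non-crossing relation, and the flag property of both complexes lifts the vertex bijection to a simplicial isomorphism.
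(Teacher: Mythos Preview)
Your proposal is correct and follows essentially the same route as the paper's proof: both reduce to checking adjacency on the $1$-skeleton (using that both complexes are flag), translate the flag ordering on intervals $[j,k]$ into the lexicographic rule on $(\max,\min)$, and then do a case analysis on the relative position of the two intervals. The paper's proof simply compresses your four cases into two, distinguishing only whether $[l,m]\subseteq[j,k]$ or not, and then states the adjacency criterion ($j<l$ in the contained case, $m<j$ otherwise) without writing out the witnesses explicitly; your version spells out the witness $b=[p+1,r]$ in the non-adjacent overlap/containment cases and the absence of any earlier witness in the disjoint case, which is exactly the verification the paper leaves implicit.
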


\begin{proof}
Since $B-D =\{[j,k]~|~2 \le j < k \le n\},$ it is clear that the stated map on vertices is a bijection. Let $[l,m],[j,k]$ be distinct elements of $B-D$ with $[l,m]$ occurring before $[j,k]$. Then $m \le k$, and if $m=k$ we have $l<j$. If $[l,m] \not \subseteq [j,k]$ then $v([l,m])$ is adjacent to $v([j,k])$ if and only if $m<j$. If $[l,m] \subseteq [j,k]$ (which entails $m<k$), then $v([l,m])$ is adjacent to $v([j,k])$ if and only if $j<l$. So in either case $v([l,m])$ is adjacent to $v([j,k])$ if and only if $(l-1,m-1)$ and $(j-1,k-1)$ are non-crossing.

\end{proof}

\end{subsection}

\begin{subsection}{The flag complexes $\Gamma(B(Cyc_n))$ and $\Gamma(P_n)$}

The cyclohedron is the nestohedron $P_{B(Cyc_n)}.$ Note that $B(Cyc_n)$ consists of all sets $\{i,i+1,i+2,...,i+s\}$ where $i \in [n]$, $s \in \{0,1,...,n-1\}$, and the elements are taken mod $n$. By \cite[Proposition 11.15]{prw} $\gamma_r(B(Cyc_n)) = \binom{n}{r,r,n-2r}$. We now describe the flag complex $\Gamma(P_n)$ defined by Nevo and Petersen \cite[Section 4.3]{np}.\\

Define the vertex set

$$V_{P_n} :=\{(l,r) \in [n-1] \times [n-1]~|~l \ne r\}.$$ $\Gamma(P_n)$ is the flag complex on the vertex set $V_{P_n}$ where vertices $(l_1,r_1),(l_2,r_2)$ are adjacent in $\Gamma(P_n)$ if and only if $l_1,l_2,r_1,r_2$ are all distinct and either $l_1<l_2$ and $r_1 <r_2$, or $l_2<l_1$ and $r_2 <r_1$.

\begin{example}
$\Gamma(P_5)$ is the flag complex on vertices $$V_{P_5} =\{(1,2),(1,3),(1,4),(2,3),(2,4),(3,4),(2,1),(3,1),(4,1),(3,2),(4,2),(4,3)\}$$ with edges

$$\{(1,3),(2,4)\},\{(3,1),(4,2)\},\{(1,2),(3,4)\},$$
$$\{(1,2),(4,3)\},\{(2,1),(4,3)\},\{(2,1),(3,4)\}.$$
\end{example}
Note that $\Gamma(P_5)$ has exactly two vertices of degree two, and has six connected components, four of which contain more than one vertex.

\begin{prop}
There is no flag ordering of $B(Cyc_5)$ so that $\Gamma(B(Cyc_5)) \cong \Gamma(P_5)$.
\end{prop}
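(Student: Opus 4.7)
My plan is to argue by contradiction, supposing that some flag ordering $(D, b_1, \ldots, b_{12})$ of $B(Cyc_5)$ yields $\Gamma := \Gamma(O) \cong \Gamma(P_5)$. A direct check of the edge list above shows that $\Gamma(P_5) \cong C_4 \sqcup 2K_2 \sqcup 4K_1$: four vertices of degree $2$ lying on a unique $4$-cycle, four vertices of degree $1$ forming two disjoint edges, and four isolated vertices.

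First I pin down $b_{12}$ via building-set closure: if $b_{12}$ had length $\ell \ge 3$, say $b_{12} = \{a,\ldots,a+\ell-1\}$, then the two overlapping sub-arcs $\{a,\ldots,a+\ell-2\}$ and $\{a+1,\ldots,a+\ell-1\}$ would both lie in $B_{11}$, and closure applied to their non-disjoint union would force $b_{12} \in B_{11}$, a contradiction. Hence $b_{12}$ is a $2$-arc. For any $2$-arc $b_{12} = \{i, i+1\}$ a direct computation shows $B(Cyc_5)/b_{12} \cong B(K_3)$ on the contracted ground set: each ``consecutive'' $2$-subset has three preimages (the $2$-arc, the $3$-arc, and the $4$-arc containing it), whereas the ``diagonal'' pair $\{i+2, i+4\}$ has the unique preimage given by the $4$-arc $\{i+4, i, i+1, i+2\}$ running the long way around the cycle through $b_{12}$. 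Therefore $\gamma(B(Cyc_5)/b_{12}) = 1 + 2t$ and $\gamma(B(Cyc_5)|_{b_{12}}) = 1$, giving $|U_{12}| = 2$, $V_{12} = \emptyset$, and $\deg v(b_{12}) = 2$. By Proposition~\ref{contprop}, $\Gamma|_{U_{12}} \cong \Gamma(B(K_3))$ consists of two isolated vertices, so the two neighbours $v(b_{u_1}), v(b_{u_2})$ of $v(b_{12})$ are non-adjacent. Since the only degree-$2$ vertices of $\Gamma(P_5)$ lie on the $C_4$, these three vertices occupy three corners of the $C_4$, with the fourth corner $X = v(b_m)$ ad
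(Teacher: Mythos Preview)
Your proposal is cut off mid-sentence (``with the fourth corner $X=v(b_m)$ ad\ldots'') and never reaches a contradiction, so as it stands it is not a proof. Everything written up to the cutoff is correct: the identification $\Gamma(P_5)\cong C_4\sqcup 2K_2\sqcup 4K_1$ is accurate; your closure argument forcing $|b_{12}|=2$ is valid because $B_{11}$ is a building set; the computation $B(Cyc_5)/b_{12}\cong B(K_3)$, hence $|U_{12}|=2$ and $V_{12}=\emptyset$, is correct; and Proposition~\ref{contprop} does give that the two neighbours of $v(b_{12})$ are non-adjacent, so $v(b_{12})$ and its neighbours occupy three corners of the $C_4$. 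But you still owe an argument eliminating the fourth corner $v(b_m)$, and you have said nothing about the two $K_2$ components. No contradiction has been produced, and it is not clear from what you wrote how one would follow.

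For comparison, the paper's argument goes a different way. It does not single out $b_{12}$ at all; instead it shows that for \emph{any} flag ordering of $B(Cyc_5)$, every edge of $\Gamma$ has at least one endpoint corresponding to a $2$-element arc. Hence the $2$-arcs in $B-D$ must form a vertex cover of $\Gamma(P_5)$. Covering $C_4\sqcup 2K_2$ already needs four vertices, and since every decomposition $D$ contains at least one $2$-arc, all four remaining $2$-arcs must be used and must be pairwise non-adjacent in $\Gamma$. The paper then narrows $D$ to two candidates and checks, for each possible earliest $2$-arc in the ordering, that some later $2$-arc is forced to be adjacent to it, giving the contradiction. If you want to salvage your own line, you will need an endgame of comparable strength---for instance, determine the possibilities for $b_{u_1},b_{u_2},b_m$ explicitly and show that the required $C_4$ cannot close up.
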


\begin{proof}
Suppose for a contradiction that there is some flag ordering of $B=B(Cyc_5)$ with decomposition $D$ such that $\Gamma(B(Cyc_5)) \cong \Gamma(P_5)$. It is not too hard to show that if vertices $v(a)$ and $v(b)$ are adjacent then at least one of $a$ or $b$ is a 2-element set. Therefore there must be at least one vertex that corresponds to a building set element of size two in each of the four non-singleton connected components of $\Gamma(B(Cyc_5))$. Since there must be one two element subset in $D$ this implies that there is exactly one vertex corresponding to a two element set in each non-singleton connected component, and these include the vertices of degree two.\\

The possibilities for $D$ (up to a cyclic permutation of $B$) are

$$D_1=\{[5],[4],[3],[2],\{1\},...,\{5\}\},$$ $$D_2=\{[5],[2],\{5,1,2\},\{5,1,2,3\},\{1\},...,\{5\}\},$$
$$D_3=\{[5],[4],\{1,2\},\{3,4\},\{1\},...,\{5\}\},$$ $$D_4=\{[5],[3],\{4,5\},\{1,2\},\{1\},...,\{5\}\}.$$

The flag ordering must have decomposition $D_1$ or $D_2$ since there are four elements of size two in $B(Cyc_5) -D.$ We will show that if $D$ is $D_1$ or $D_2$ then there must be two vertices in $\Gamma(B(Cyc_5))$ that are adjacent that correspond to building set elements of size two, a contradiction.\\

The size two elements in $B-D_1$ and $B-D_2$ are $\{2,3\},\{3,4\},\{4,5\},\{5,1\}$. If $b_j \in B - D_i$ is earlier in the flag ordering than every one of these size two elements, then $b_j$ must contain $\{1,2\}$ since otherwise it would not have a decomposition in $B_j$. So the only elements of $B-D_1$ that can be earlier in the flag ordering than every element of size two are

$$S_1 = \{\{5,1,2,3\},\{5,1,2\},\{4,5,1,2\}\}.$$ Similarly, the elements of $B-D_2$ that can be earlier in the flag ordering than every element of size two are

$$S_2 = \{\{1,2,3\},\{4,5,1,2\},\{1,2,3,4\}\}.$$

Consider which of the size two elements in $B-D_i$ is earliest.
Suppose that $\{1,5\}$ is earliest in the flag order. Then $v(\{3,4\})$ is adjacent to $v(\{1,5\})$ since $\{1,5\} \not \in (D_1 \cup S_1)/\{3,4\} =(D_2 \cup S_2)/\{3,4\}.$\\

Suppose that $\{2,3\}$ is earliest in the flag order. Then $v(\{4,5\})$ is adjacent to $v(\{2,3\})$ since $\{2,3\} \not \in (D_1 \cup S_1)/\{4,5\} =(D_2 \cup S_2)/\{4,5\}$.\\

Suppose that $\{3,4\}$ is earliest in the flag order. Then $v(\{1,5\})$ is adjacent to $v(\{3,4\})$ since $\{3,4\} \not \in (D_1 \cup S_1)/\{1,5\} =(D_2 \cup S_2)/\{1,5\}$.\\

Suppose that $\{4,5\}$ is earliest in the flag order. Then $v(\{2,3\})$ is adjacent to $v(\{4,5\})$ since $\{4,5\} \not \in (D_1 \cup S_1)/\{2,3\} =(D_2 \cup S_2)/\{2,3\}$.\\

\end{proof}

\end{subsection}

\begin{subsection}{The flag complex $\Gamma(B(K_{1,n-1}))$.}

Here we give a combinatorial description of $\Gamma(B(K_{1,n-1}))$ for a particular flag ordering. $B=B(K_{1,n-1})$ is the graphical building set for the graph $K_{1,n-1}$ where we assume the vertex of degree $n-1$ is labelled $1$. So $B(K_{1,n-1})$ consists of all subsets of $[n]$ containing 1, together with $\{2\},\{3\},...,\{n\}$. Let $O$ be the flag ordering with decomposition $$D = \{[n],[n-1],...,[2],\{1\},\{2\},...,\{n\}\},$$ where $a,b \in B-D$ are ordered so that $a$ is earlier than $b$ if:

\begin{itemize}
\item $\max(a) < \max(b)$, or
\item $\max(a) =\max(b)$ and $|a| > |b|,$ or
\item $\max(a) = \max(b)$ and $|a| = |b|$ and $\min(a \nabla b) \in a.$
\end{itemize}

Then in $\Gamma(O)$, vertices corresponding to elements $a,b \in B-D$ are adjacent if either:
\begin{itemize}
\item $a \subseteq b$ and $\min(b-a) < \max(a),$
\item $\max(a) \not \in b$ and $|a \backslash b| \ge 2$ and $\min(b \backslash a) >\max(a)$.
\end{itemize}

\begin{example}
The edges of $\Gamma(B(K_{1,4}))$ are:

$$\{v(\{1,5\}),v(\{1,2,4\})\},\{v(\{1,5\}),v(\{1,3,4\})\}$$
$$\{v(\{1,3,4,5\}),v(\{1,4\})\},\{v(\{1,2,4,5\}),v(\{1,4\})\}.$$
\end{example}

In fact, for this flag ordering, the restriction of $\Gamma(K_{1,n-1})$ to the vertices corresponding to sets of size $\ge 3$ is isomorphic to $\Gamma(K_{n-1})$ for the flag ordering defined in Section 4.1.

\end{subsection}

\end{section}

\end{document}